\numberwithin{equation}{section}  
\newcommand{\gk}[1]{\left\{#1\right\}}
\newcommand\mycom[2]{\genfrac{}{}{0pt}{}{#1}{#2}}
\newcommand{\ek}[1]{\left[#1\right]}
\newcommand{\rk}[1]{\left(#1\right)}
\newcommand{\abs}[1]{\left| #1 \right|}
\newcommand{\Ocal}   {{\mathcal O }}
\newcommand{\R}     {\mathbb{R}}
\renewcommand{\P}   {\mathbb{P}} 
\newcommand{\E}     {\mathbb{E}}
 \newcommand{\ex}{{\rm e}} 
 \renewcommand{\d}{{\rm d}}
\renewcommand{\l}{\lambda}
\newcommand{\e}{\varepsilon}
\newcommand{\1}{\mathbbm{1}}
\renewcommand{\P}{\mathbb{P}}
\newtheorem*{remark}{Remark}
\newtheorem{theorem}{Theorem}
\newtheorem{lemma}{Lemma}[section]
\newtheorem{definition}[lemma]{Definition}
\newtheorem{corollary}[lemma]{Corollary}
\newcommand{\Var}{\mathrm{Var}}
\title{Phase Transitions in a Particle Model\\ for the Self-Adaptive Response to Cancer Dynamics}
\author{Christian Kuehn$^{1}$ and Quirin Vogel$^{2}$}
\date{\today}
\begin{document}

\maketitle

\begin{abstract}
    In this paper, we present a probabilistic analysis of a dynamical particle model for the self-adaptive immune response to cancer, as proposed in~\cite{kuehn2025mathematicalmodelsselfadaptiveresponse}. The model is motivated by the interplay between immune surveillance and cancer evolution. We rigorously confirm the sharp phase transition in immune system \textit{learning} predicted in the original work. Additionally, we compute the expected amount of information acquired by the immune system about cancer cells over time. Our analysis relies on time-reversal techniques.
\end{abstract}
\centerline{\textit{$^1$Technical University of Munich, Garching, Germany}}
\centerline{\textit{$^2$
Alpen-Adria-Universität Klagenfurt, Austria}}
\bigskip

\bigskip\noindent 
{\it MSC 2020.} 60J28, 92C37

\medskip\noindent
{\it Keywords and phrases.} Particle models, cancer, phase transitions, Cutoff phenomenon
\section{Introduction}
This paper extends the work of~\cite{kuehn2025mathematicalmodelsselfadaptiveresponse}, which introduced two models for the self-adaptive response to cancer dynamics. The first is a macroscopic model describing adaptivity via an ordinary differential equation (ODE), presented in Section 2 of the referenced paper. The second is a microscopic stochastic model, supported by simulations in~\cite[Section 3]{kuehn2025mathematicalmodelsselfadaptiveresponse}. In the present work, we provide a rigorous mathematical analysis of the stochastic model, complementing the previous simulations with formal proofs.

The primary motivation for the microscopic model is to capture the competition between cancer-promoting mechanisms (e.g., adaptive mutations in cancer cells) and cancer-inhibiting processes (e.g., immune responses). Cancer cells gain an advantage when they successfully evade immune detection. Conversely, it is widely conjectured that immunotherapy and other immune-mediated mechanisms may be able to effectively counteract tumor growth in many cases~\cite{baxevanis2023thin,khanmammadova2023neuro}.

We briefly describe the model; see Section~\ref{sec:model} for the complete formulation. Suppose that there are $N\ge 1$ distinct components that the immune system can use to combat cancer cells. Each component has $M\ge 1$ attributes. In particular, we abstractly model that the immune system gradually acquires knowledge and adapts to new diseases~\cite{ParkinCohen}, e.g., trying to eliminate cancer cells. We refer to this learning mechanism as the process of adding information (PAI). The information level for each attribute is represented by an integer in $\gk{0,\ldots, M}$, where $0$ indicates no knowledge and $M$ indicates complete knowledge. A cancer cell can be effectively targeted only once all its attributes have been fully learned.

This immune response is counteracted by cancer cell mutation, which can make previously acquired knowledge obsolete. Simulations in~\cite{kuehn2025mathematicalmodelsselfadaptiveresponse} suggest the emergence of a sharp phase transition: given sufficient time and given sufficient external input regarding the main characteristics of cancer cells, the immune system rapidly learns a positive fraction of all components around a critical time $T_{\mathrm{pt}}$.

The contributions of this paper are two-fold:
\begin{enumerate}
\item We calculate the expected transition time $T_{\mathrm{pt}}$, and show that its fluctuations occur on a smaller scale than its mean---demonstrating sharpness of the transition.
\item We derive an explicit expression for the expected number of learned components in the steady state.
\end{enumerate}

We conduct our analysis in two settings:
\begin{enumerate}
\item Considering the aggregate behavior of all $N$ components.
\item Tracking the dynamics of a single isolated component.
\end{enumerate}
The results differ between these two settings, for example, the scale of $T_{\mathrm{pt}}$ and the size of the transition window, due to the interaction structure of the stochastic cancer-immune dynamics.

Our proof strategy differs between the two models. For the evolution of a single component, we use the fact that the invariant distribution can be expressed using ratios of Gamma functions. For the sharpness of the phase transition, we use a second-moment argument, by deriving a recursive equation for an upper bound of the variance of the transition time. For the analysis of the full model, we can use some results on the coupon collector problem (in random-time) to lower bound the transition time $T_{\mathrm{pt}}$. For the upper-bound, we derive an algorithm for sampling the invariant distribution, which limits $T_{\mathrm{pt}}$ from above and use time-reversal, see \cite[Chapter 10]{chung2005markov} for an introduction to this method. 

While our investigation settles the question of sharpness and steady-state analysis, a number of open questions remain. For example, it would be interesting to see how to fit the parameters in our model from observations. This is relevant because it would allow us to calculate the necessary time for the immune response to adapt to the dynamics of cancer and potentially influence treatment strategies. Another question is how universal with respect to the dynamics the sharpness of the transition is? Since our model is \textit{non-reversible}, a standard analysis of sharpness using eigenvalues is not readily available (see~\cite{levin2017markov}), although recently some general results on bounds have been attained for non-reversible chain, see~\cite{chatterjee2023spectral}. Finally, studying the invariant law in more detail and proving convergence under suitable rescaling remains an open question.

Our paper is structured as follows: in Section~\ref{sec:model}, we define both our stochastic models and relate them to~\cite{kuehn2025mathematicalmodelsselfadaptiveresponse}. In Section~\ref{sec:results}, we give the results. In Section~\ref{sec:proof}, we prove the statements, starting with the evolution of a single attribute, splitting the proof of the multicolumn model in two cases.
\section{The model}\label{sec:model}
Our model is based on the paper~\cite{kuehn2025mathematicalmodelsselfadaptiveresponse}. However, we choose to state our results in continuous-time as opposed to the discrete-time setting in~\cite{kuehn2025mathematicalmodelsselfadaptiveresponse}. This models the real-world behavior of stochastic processes, where transitions occur at random times.
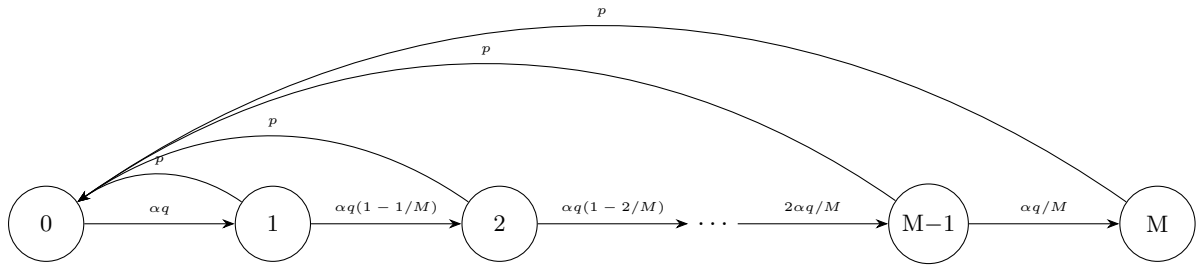
\begin{figure}
    \centering
    \begin{tikzpicture}[
    >=Stealth,
    node distance = 2cm and 2cm,
    state/.style = {circle, draw, minimum size=1cm, font=\small},
    edge/.style  = {->, >=Stealth}
  ]

\node[state] (0) {0};
\node[state, right=of 0] (1) {1};
\node[state, right=of 1] (2) {2};
\node[draw=none, right=of 2] (dots) {$\ldots$};
\node[state, right=of dots] (m1) {M${-}1$};
\node[state, right=of m1]  (m) {M};

\draw[edge] (0)  -- node[midway, above] {\tiny{$\alpha q$}} (1);
\draw[edge] (1)  -- node[midway, above] {\tiny{$\alpha q(1-1/M)$}} (2);
\draw[edge] (2)  -- node[midway, above] {\tiny{$\alpha q(1-2/M)$}} (dots);
\draw[edge] (dots) -- node[midway, above] {\tiny{$2\alpha q/M$}} (m1);
\draw[edge] (m1) -- node[midway, above] {\tiny{$\alpha q/M$}} (m);

\draw[edge, bend right=35] (1) to node[midway, above] {\tiny{$p$}} (0);
\draw[edge, bend right=35] (2) to node[midway, above] {\tiny{$p$}} (0);
\draw[edge, bend right=35] (m1) to node[midway, above] {\tiny{$p$}} (0);
\draw[edge, bend right=35] (m)  to node[midway, above] {\tiny{$p$}} (0);

\end{tikzpicture}
    \caption{Illustration of chain from Definition~\ref{def:singleCol}}
    \label{fig:singleCol}
\end{figure}
First, let us introduce the evolution of a single column.
\begin{definition}\label{def:singleCol}
    Let $\alpha>0$ and $0<p=1-q<1$. Let $\rk{X_t}_{t\ge 0}$ be the continuous-time Markov process on $\gk{0,1,\ldots, M}$ specified by the following transition rates $Q\colon \gk{0,\ldots,M}^2\to\R$
\begin{equation}
    Q(k,j)=\begin{cases}
        \alpha q\rk{1-\tfrac{k}{M}}&\textnormal{ if }j=k+1\, ,\\
        p&\textnormal{ if }j=0\, ,\\
        0&\textnormal{ otherwise.}
    \end{cases}
\end{equation}
\end{definition}
See Figure~\ref{fig:singleCol} for an illustration. Note that the above chain is non-reversible!
\begin{remark}\label{rem:equiv}
    The model introduced in Definition \ref{def:singleCol} asymptotically tracks the evolution of a single, fixed component in the microscopic model introduced in~\cite{kuehn2025mathematicalmodelsselfadaptiveresponse} in continuous-time, when making the identification
    \begin{equation}
        p\mapsto \frac{p_d}{N}\quad\text{and}\quad \alpha=1+\l_m\, ,
    \end{equation}
    where we use the notation from the original paper and given the assumption that $p_m M$ converges to $\lambda_m>0$.
\end{remark}
The remark above is proven in Lemma \ref{lem:id}.

Next, we introduce the random matrix evolution, motivated by~\cite{kuehn2025mathematicalmodelsselfadaptiveresponse}. 
\begin{definition}\label{def:mutlCol}
    Let $(A_t)_{t\ge 0}$ with $A_t\in\R^{M\times N}$ be the continuous-time Markov chain with following transition rates
    \begin{equation}\label{eq:transitionRates}
        Q(A,B)=\begin{cases}
            \tfrac{q}{M}&\textnormal{ if }B=A\oplus_j\1\, ,\\
            \tfrac{p}{N}&\textnormal{ if }B=A\ominus_i\mathbb{0}\, ,\\
            \frac{\l_m}{M}&\textnormal{ if }B=A\oplus_{i,j}\1\, ,\\
            0&\textnormal{ otherwise,}
        \end{cases}
    \end{equation}
    where $A\oplus_j\1$ is the matrix where we replace the $j$-th row by $\1=(1,\ldots,1)$, $A\oplus_{i,j}\1$ replaces the $(i,j)$-th entry with 1 and $A\ominus_i\mathbb{0}$ replaces the $i$-th column with $(0,\ldots,0)^\mathrm{T}$. Assume w.l.o.g. that $p,q>0$, $\l_m\ge 0$ and $p+q=1$.
\end{definition}
\begin{figure}
\begin{tikzpicture}[scale=0.85, transform shape, >=Stealth, node distance=1cm, every node/.style={font=\small}]

\node (A) {
$
A = 
  \begin{bmatrix}
    \star & \dots & \star & \dots & \star \\
    \vdots& \ddots& \vdots & \ddots & \vdots \\
    \star & \dots & \star & \dots & \star \\
    \vdots& \ddots& \vdots & \ddots & \vdots \\
    \star  & \dots& \star & \dots & \star
  \end{bmatrix},
  $
};

\node[right=of A, xshift=-1cm] (row1s) {
$
 A\oplus_j\1= \begin{bmatrix}
    \star & \dots & \star & \dots & \star \\
    \vdots&  & \vdots &  & \vdots \\
    \color{blue}1 & \dots & \color{blue}1 & \dots & \color{blue}1 \\
    \vdots&  & \vdots &  & \vdots \\
    \star & \dots & \star & \dots & \star
  \end{bmatrix},
$
};
\node[below=of row1s, yshift=1cm] (eqn) {Rate $q/M$};
\node[right=of A,xshift=4cm] (col0s) {
  $A\ominus_i\mathbb{0}=
  \begin{bmatrix}
    \star & \dots & \color{red}0 & \dots & \star \\
    \vdots&  & \vdots &  & \vdots \\
    \star & \dots & \color{red}0 & \dots & \star \\
    \vdots&  & \vdots &  & \vdots \\
    \star & \dots & \color{red}0 & \dots & \star
  \end{bmatrix},
  $
};
\node[below=of col0s, yshift=1cm] (eqn) {Rate $p/N$};
\node[right=of A, xshift=9cm] (flip1) {
  $A\oplus_{i,j}\1=
  \begin{bmatrix}
    \star & \dots & \star & \dots & \star \\
    \vdots&  & \vdots &  & \vdots \\
    \star & \dots & \color{green}1 & \dots & \star \\
    \vdots&  & \vdots &  & \vdots \\
    \star & \dots & \star & \dots & \star
  \end{bmatrix}.
  $
};
\node[below=of flip1, yshift=1cm] (eqn) {Rate $\lambda_m/M$};

\end{tikzpicture}
    \caption{Visualization of the matrix transitions in Definition \ref{def:mutlCol}.}
    \label{fig:trans}
\end{figure}
See Figure~\ref{fig:trans} for a visualization of the above model with its transition rates. From an applied perspective, the three processes in Definition~\ref{def:mutlCol} model active externally-driven targeted learning, forgetting (or cancer evasion) of learned structures, and positive evolutionary mutation respectively.

\begin{remark}
    The above Markov chain models the evolution of all components, as described in~\cite{kuehn2025mathematicalmodelsselfadaptiveresponse}, in continuous time when making the identification $p_m=\tfrac{\l_m}{M}$. The case of $p_m=0$ is referred to as PAI switched off and is easier to analyse.
\end{remark}

\section{Results}\label{sec:results}
First, we characterize the distribution of a single component.
\begin{theorem}[Evolution of a single column]\label{thm:singleCol}
    Given $\alpha>0$ the speed of the updates to 1 and $p$ the probability of deletion, as described in Definition \ref{def:singleCol}. Let $\tau$ be the first time the column consists of a row of only ones. Choose $a=\tfrac{p M}{\alpha q}$ and assume that $a\in (0,\infty)$ is fixed as $M\to\infty$. We then have that 
    \begin{equation}
        \E\ek{\tau|X_0=0}\sim  \frac{M^{a+1}}{\Gamma(a+1)a}\, ,
    \end{equation}
    and
    \begin{equation}
        \P\rk{\abs{\tau-\E\ek{\tau}}\gg M^{(a+1)/2}}=o(1)\, ,
    \end{equation}
    as $M\to\infty$. This means that we observe a cut-off phenomenon on the square-root scale.

    Finally, we also have the following transitions based on the value of $a$: let $\pi$ be the invariant distribution of the chain and let $\P_\pi$ be the distribution of the chain started from the invariant distribution. There then exists $C>0$ such that
    \begin{equation}
                C^{-1}k^{a-1}\le \frac{\P_\pi\rk{ k\textnormal{ zeros}}}{\P_\pi\rk{\textnormal{ no zeros}}}\le C k^{a-1}\, ,
        \end{equation}
        as $M\to\infty$.
        
        Hence 
    \begin{enumerate}
        \item if $a>1$: most of the times we have a large number of ones,
        \item if $a=1$, ones and zeros balance,
        \item if $a<1$, most of the times, there is a large number of zeros.
    \end{enumerate}
\end{theorem}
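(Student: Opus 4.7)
The plan is to treat the three claims in the order invariant law $\to$ mean $\to$ fluctuations, since the invariant distribution computed from local balance drives all three asymptotics. The only non-trivial transitions out of state $k$ are $k \to k+1$ at rate $r_k := \alpha q(1-k/M)$ and $k \to 0$ at rate $p$ (with $r_M = 0$), and the flow into $k$ comes only from $k-1$, so $\pi(k)(r_k+p) = \pi(k-1)\, r_{k-1}$ for $k = 1, \ldots, M$. Substituting $p = \alpha q a/M$ simplifies the ratio to $\pi(k)/\pi(k-1) = (M-k+1)/(M-k+a)$, and telescoping yields
\begin{equation*}
\frac{\pi(M-k)}{\pi(M)} = \frac{\Gamma(k+a)}{k!\,\Gamma(a)}.
\end{equation*}
Stirling's formula gives $\Gamma(k+a)/k! = k^{a-1}(1+O(1/k))$, which upgrades to a uniform two-sided bound $C^{-1} k^{a-1} \le \pi(M-k)/\pi(M) \le C k^{a-1}$ valid for all $1 \le k \le M$. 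Since state $M-k$ corresponds to $k$ zeros and $M-k$ ones, this gives the third claim together with the qualitative regime description.

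For the mean time, I would invoke the standard CTMC identity $\E_M\ek{\text{return to }M} = 1/(\pi(M)\,q_M)$ with exit rate $q_M = p$. The only transition from $M$ lands at $0$, so decomposing the return time into the exponential holding at $M$ (mean $1/p$) plus the first passage from $0$ to $M$ gives
\begin{equation*}
\E\ek{\tau | X_0 = 0} = \frac{1}{p\,\pi(M)} - \frac{1}{p}.
\end{equation*}
Normalizing $\pi$ via the identity $\sum_{k=0}^{M} \Gamma(k+a)/(k!\,\Gamma(a)) = \binom{M+a}{M} \sim M^a/\Gamma(a+1)$ yields $\pi(M) \sim \Gamma(a+1)/M^a$, and inserting $p = \alpha q a/M$ produces the stated scaling $\E\ek{\tau|X_0=0} \sim M^{a+1}/(a\,\Gamma(a+1))$.

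For the cutoff on the scale $M^{(a+1)/2}$, I would run the second-moment strategy announced in the introduction. Set $f(k) := \E\ek{\tau|X_0=k}$ and $g(k) := \E\ek{\tau^2|X_0=k}$; a first-step analysis combined with the memoryless property of the exponential holding times gives
\begin{equation*}
g(k) = \frac{2 f(k)}{r_k + p} + \frac{r_k}{r_k+p}\, g(k+1) + \frac{p}{r_k+p}\, g(0), \qquad g(M) = 0,
\end{equation*}
paralleling the recursion satisfied by $f$. Translating this into a recursion for $V(k) := g(k) - f(k)^2$, iterating from $k = M$ downward, and inserting the explicit form of $f$ from the previous step yields an upper bound on $V(0)$; Chebyshev's inequality then delivers the claimed concentration.

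The delicate step is obtaining a sharp enough bound on $V(0)$. A naive decomposition of $\tau$ as a geometric-indexed sum of $\Theta(M^a)$ failed excursions (each of mean length $\Theta(M)$) plus one successful excursion gives, via the standard variance formula for a random sum, $\Var(\tau) = \Theta(M^{2(a+1)})$, which matches $\E\ek{\tau}^2$ and is too weak by a full square root for Chebyshev alone. Extracting the improvement must therefore exploit the internal cancellation of the $V$-recursion beyond what naive geometric moment inequalities provide, leveraging that $f(k)-f(0)$ and the correction terms in the $g$-recursion are structurally aligned; this is the main technical difficulty of the argument.
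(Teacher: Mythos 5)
Your route to the invariant law and the $k^{a-1}$ tail classification matches the paper's argument. Your derivation of $\E_0[\tau]$ is genuinely different and cleaner: you invoke the Kac return-time identity for state $M$ and peel off the exponential holding time at $M$, whereas the paper solves the first-step recursion for $f(i)$ directly via the ansatz $1+pf(0)=b_{k-1}+pf(k)$ and a multiplicative recursion for the $b_k$'s. Both land on the same asymptotic for $f(0)$, but your version avoids the bookkeeping entirely and is the one I would keep.

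The gap you flag on the concentration estimate is genuine, and I do not believe it can be closed by sharpening the $V$-recursion. Your excursion heuristic is quantitatively right: the probability that a single excursion from $0$ reaches $M$ before resetting is $\rho=\prod_{j=1}^{M-1}\tfrac{j}{j+a}\sim\Gamma(a+1)M^{-a}$, the number of failures is geometric with mean $\sim 1/\rho$ and variance $\sim 1/\rho^2$, and each failed excursion has mean and standard deviation of order $M$. The random-sum formula then forces $\Var(\tau)=\Theta(M^{2(a+1)})=\Theta(\E[\tau]^2)$, and this is the true order rather than a lossy bound: $\tau$ is a geometric sum of near-i.i.d.\ excursion lengths, so $\tau/\E[\tau]$ has an exponential limit and there is no sub-$\E[\tau]$ fluctuation window. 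Concretely, the inhomogeneous forcing in your $V$-recursion is $\Delta(k)=\Var\bigl(\E[\tau\mid\text{next state}]\bigr)\approx p\,(f(0)-f(k))^2$, which is of order $M^{2a+1}$ for $k$ near $M$; since the Green's-function weight $w(0,M-1)$ of the recursion is itself of order $M$, this single term already contributes $\Theta(M^{2a+2})$ to $V(0)$. The paper's corresponding lemma asserts $\Var_i(H_M)\le C\,\E_i[H_M]$ by bounding $g(0)-f(i)^2\le g(0)-f(0)^2$, but the cited monotonicity $f(i)\le f(0)$ gives the \emph{reverse} inequality, and the auxiliary homogeneous recursion it defines for $\tilde{h}$ with $\tilde{h}(M)=0$ forces $\tilde{h}\equiv 0$, so that step does not go through there either. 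I would revisit the claimed $M^{(a+1)/2}$ scale before investing further in the second-moment route.
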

Next, we discuss the model where we delete columns and add rows/single entries.
\begin{theorem}\label{thm:multCol}
\begin{enumerate}
    \item(PAI off, $\l_m=0$)
    Let $(A_t)_{t\ge 0}$ be the evolution of the model according to Definition \ref{def:mutlCol}. Let $\tau$ be the first time a column is equal to $\1=(1,\ldots,1)$. Denote $b=\tfrac{pM}{qN}$ and assume that this remains constant. We then have that for $\e_M\to 0$ sufficiently slowly\footnote{here, $\e_M>\log^{-1/2}(M)$ is sufficient}
    \begin{equation}\label{eq:transTimePAIoff}
        \lim_{\mycom{M\to\infty}{b={pM}/\rk{qN}}}\P\rk{\abs{\tau-\frac{M\log(M)}{q}}\le \e_M M\log(M)\mid A_0\equiv 0}=1\, .
    \end{equation}
    Furthermore, as $M\to\infty $ and $b=\tfrac{pM}{qN}$ constant, we have that
    \begin{equation}\label{eq:RationPAIoff}
        \E_\pi\ek{\#\gk{\text{columns }c\text{ with }c\equiv 1}}\sim N\frac{\Gamma(b+1)}{ (M/q)^{b}}\, ,
    \end{equation}
    where $\pi$ is the stationary distribution (on the space of matrices). Furthermore, after time $\tau(1+\e)$, the number of columns equal to 1 agrees with the stationary distribution. This implies that the model has a cut-off phenomenon on the $\log$-scale.
    \item(PAI on) Introduce now the new parameter $\widetilde{q}=q+\lambda_m$. Denote $\widetilde{b}=\tfrac{{p}M}{\widetilde{q}N}$ and assume that it remains constant. We then have that under the same conditions on $\e_M$ as before
    \begin{equation}
         \lim_{\mycom{M\to\infty}{\widetilde{b}={pM}/\rk{qN}}}\P\rk{\abs{\tau-\frac{M\log(M)}{\widetilde{q}}}\le \e_M M\log(M)\mid A_0\equiv 0}=1\, ,
    \end{equation}
    as well as
    \begin{equation}
        \E_\pi\ek{\#\gk{\text{columns }c\text{ with }c\equiv 1}}\sim N\frac{\Gamma\rk{\widetilde{b}+1}}{ (M/\widetilde{q})^{\widetilde{b}}}\, .
    \end{equation}
    Hence, PAI can be achieved by a change of parameters.
\end{enumerate}
\end{theorem}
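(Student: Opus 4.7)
The plan is to prove Theorem~\ref{thm:multCol}(1), from which part (2) follows by a direct parameter substitution. I will bound the transition time $\tau$ from below by a coupon-collector argument, and from above by first computing the column marginal of the invariant measure $\pi$ and then using time-reversal to promote this stationary identity to a dynamical bound on the scale $M\log M/q$.

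For the lower bound I isolate the row-update mechanism. Starting from $A_0\equiv 0$, with $\lambda_m=0$ the only transitions that set an entry to one are the row-updates $A\mapsto A\oplus_j\1$. In particular, no column can equal $\1$ before every row label $j$ has appeared in at least one row-update. The total rate of row-updates is $M\cdot(q/M)=q$ with the label chosen uniformly at random, so the waiting time $T_{\mathrm{CC}}$ for all $M$ labels is the classical coupon-collector time, with mean $(M/q)(\log M+O(1))$ and variance $O((M/q)^2)$. Chebyshev's inequality then yields $\P(\tau\ge (1-\e_M)M\log M/q)\to 1$ whenever $\e_M\gg 1/\log M$, which is compatible with the footnoted condition $\e_M>(\log M)^{-1/2}$.

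For the upper bound and for~\eqref{eq:RationPAIoff}, I first compute $\P_\pi(c\equiv\1)$ using exchangeability of columns. Conditioning on the age $S$ of the last deletion of column $c$ (which is $\mathrm{Exp}(p/N)$ in stationarity) and using that, given $S$, the $M$ rows are independently updated at rate $q/M$,
\begin{equation*}
    \P_\pi(c\equiv\1) \;=\; \frac{p}{N}\int_0^\infty e^{-ps/N}\bigl(1-e^{-qs/M}\bigr)^M \d s \;=\; b\int_0^\infty e^{-bu}(1-e^{-u})^M\, \d u \;=\; \frac{\Gamma(b+1)\Gamma(M+1)}{\Gamma(M+b+1)}
\end{equation*}
after the substitution $u=qs/M$ and a Beta-function identity; Stirling's formula then gives the asymptotic in~\eqref{eq:RationPAIoff}. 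For the upper bound on $\tau$, I use the time-reversed chain: its explicit description, derived from $\pi$ via $\widehat Q(y,x)=\pi(x)Q(x,y)/\pi(y)$, furnishes a perfect sampler for $\pi$ whose runtime is again governed by a coupon-collector event on the reversed Poisson stream of row-updates. Combining this with a second-moment estimate on $\#\{c\equiv\1\}$ under $\pi$ and coupling the sampler to the forward chain on $[0,(1+\e_M)M\log M/q]$ shows that, with high probability, the forward chain exhibits an all-ones column within this window.

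For part (2), each entry $(i,j)$ is reset to $1$ at total rate $(q+\lambda_m)/M=\widetilde q/M$ once we include the single-entry flips, so both the coupon-collector lower bound and the Beta-integral above carry through verbatim with $q$ replaced by $\widetilde q$ and $b$ by $\widetilde b$. The main obstacle throughout is the upper-bound step: because the chain is non-reversible, the time-reversed dynamics are not of the same form as the forward ones, and the reversed column-marginals do not factorize exactly. Identifying the reversed generator explicitly and controlling these dependencies is the delicate point; here the explicit Gamma-form of $\pi$ and a careful change of measure in the spirit of \cite[Chapter 10]{chung2005markov} are essential.
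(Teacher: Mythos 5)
Your proposal follows the same overall strategy as the paper: a coupon-collector lower bound on $\tau$, an exact Gamma-ratio formula for the stationary column marginal, and a time-reversal argument for the upper bound. The one real difference is in how you obtain the stationary column probability. You condition on the backward recurrence time $S\sim\mathrm{Exp}(p/N)$ of the last deletion of column $c$ and evaluate the Beta integral
\begin{equation*}
\P_\pi(c\equiv\1)=\frac{p}{N}\int_0^\infty e^{-ps/N}\bigl(1-e^{-qs/M}\bigr)^M\,\d s
=\frac{\Gamma(b+1)\Gamma(M+1)}{\Gamma(M+b+1)}\,,
\end{equation*}
whereas the paper identifies $\P_\pi(c\equiv\1)=\E\bigl[e^{-(p/N)T}\bigr]$ with $T$ the (continuous-time) coupon-collector time for the rows and evaluates the Laplace transform as a telescoping product. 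Both calculations reduce to the same Gamma-ratio (your $b=Mp/(qN)$ equals their $M\alpha/q$ at $\alpha=p/N$), so this is a cosmetic variant rather than a new route. In fact the paper's route is arguably cleaner for the upper bound on $\tau$, because it is the Laplace-transform/coupon-collector formulation that lets one reuse the same tail estimate for the transition-time upper bound, rather than invoking the abstract reversed generator $\widehat Q(y,x)=\pi(x)Q(x,y)/\pi(y)$ as you suggest. The paper never needs that formula: it derives an explicit algebraic identity for compositions of row-updates and column-deletions (its Eq.~\eqref{eq:4125}), which reverses the order of operations directly and gives the perfect sampler and the loss-of-memory property in one stroke. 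Your final worry --- that the reversed dynamics are "not of the same form" and that the "reversed column-marginals do not factorize exactly" --- is therefore somewhat misplaced as an obstacle; the paper sidesteps it entirely by working with the operation semigroup rather than the reversed generator. That said, you are right to flag that a second-moment bound on $\#\{c\equiv\1\}$ under $\pi$ is needed to convert the stationary expectation into a high-probability statement for the upper bound on $\tau$ --- the paper is terse on exactly this point.
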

See Figure~\ref{fig:Kuehn1} and Figure~\ref{fig:Kuehn2} for an illustration of the theorem for the parameters used in~\cite{kuehn2025mathematicalmodelsselfadaptiveresponse}.
\begin{figure}
    \centering
    \includegraphics[width=0.85\linewidth]{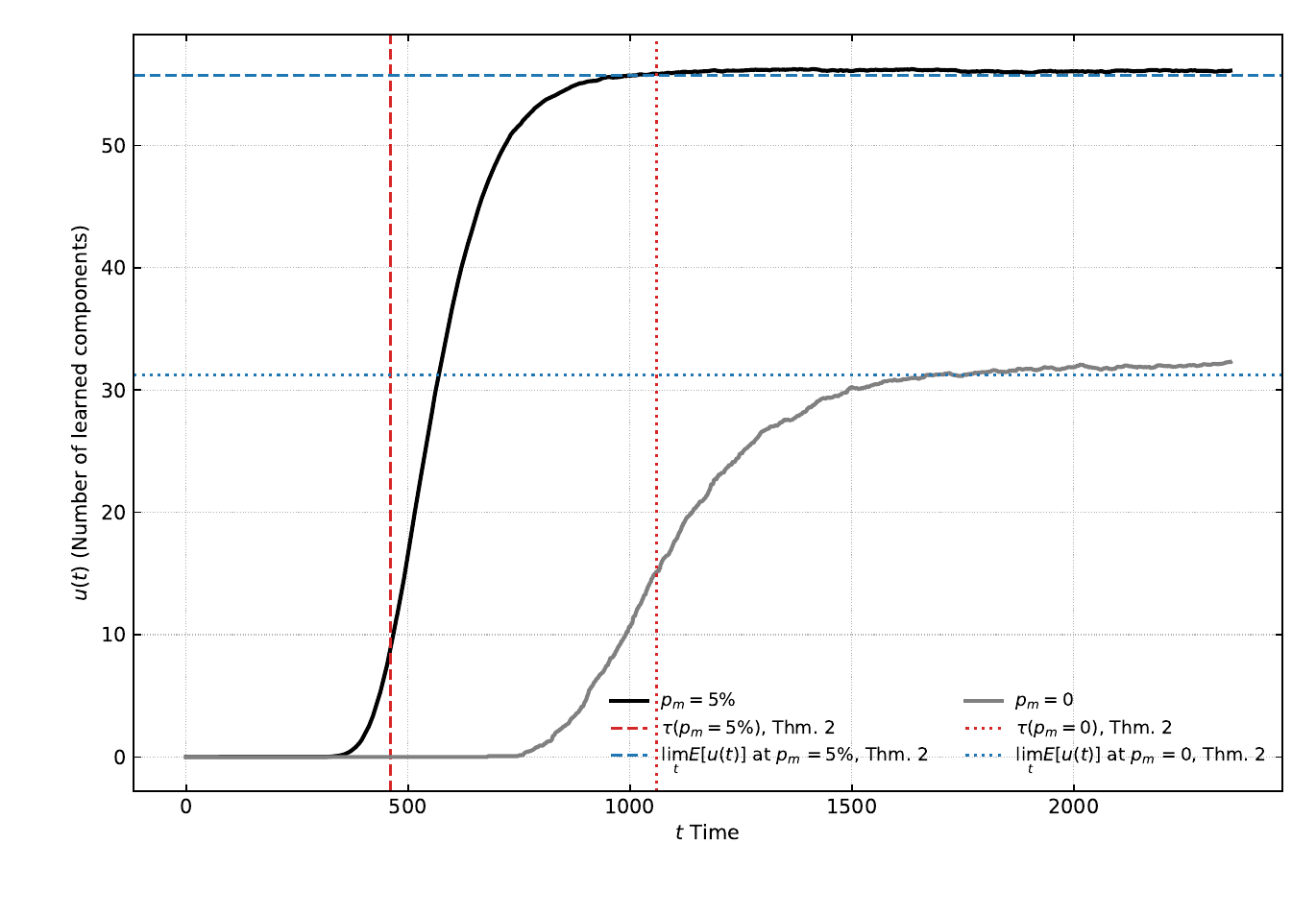}
    \caption{Simulations from~\cite[Figure 4]{kuehn2025mathematicalmodelsselfadaptiveresponse}, predicted values for transition time and proportion of ones from Theorem \ref{thm:multCol} as straight lines, $p_d=0.1$, $M=200$, $N=100$, $T=2500$, $p_m=0$ respectively $p_m=0.005$. 1000 samples path average.}
    \label{fig:Kuehn1}
\end{figure}
\begin{figure}
    \centering
    \includegraphics[width=0.5\linewidth]{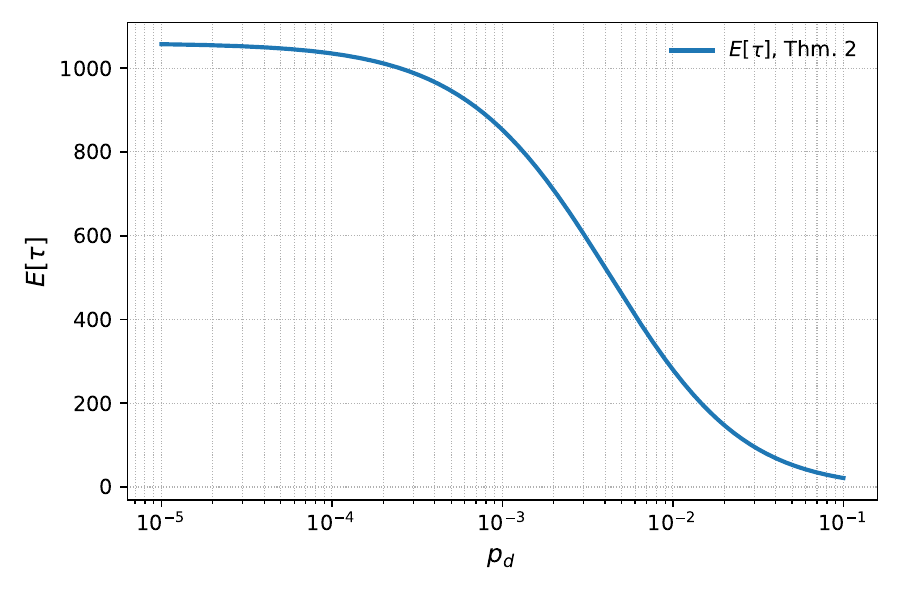}
    \caption{Plot of the expected transition time with $M = 200$, $N = 100$, and $p_d=0.1$. We plot the location of the transition point $\tau$ in time depending on the good mutation probability $p_m$. The scale is semi-logarithmic with a log-scale on the horizontal axis. Compare with~\cite[Figure 5]{kuehn2025mathematicalmodelsselfadaptiveresponse}, to see the match between prediction and simulation.}
    \label{fig:Kuehn2}
\end{figure}
\section{Proofs}\label{sec:proof}
This section is structured as follows: we begin with a proof of Theorem \ref{thm:singleCol}. We then write a more detailed proof of the first part of Theorem \ref{thm:multCol} and finally highlight the changes needed to accommodate PAI on.
\subsection{A single column}
The state of a single column can be modeled by a process on $\gk{0,\ldots, M}$, where the state indicated the number of entries equal to one in the systems.

We begin by making the connection between the model introduced in~\cite{kuehn2025mathematicalmodelsselfadaptiveresponse} and our set-up rigorous.
\begin{lemma}[Proof of Remark \ref{rem:equiv}]\label{lem:id}
    Given that $p_mM\to \lambda_m$, we have that the total variation distance between between the two models converges to zero. 
\end{lemma}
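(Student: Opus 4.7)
The plan is to couple the discrete-time single-column dynamics from \cite{kuehn2025mathematicalmodelsselfadaptiveresponse} with the continuous-time chain of Definition~\ref{def:singleCol} and show that, on any fixed time horizon $[0,T]$, the two laws differ in total variation by an amount that vanishes as $M\to\infty$.

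First, I would project the full microscopic discrete-time chain from \cite{kuehn2025mathematicalmodelsselfadaptiveresponse} onto one fixed column, reading off the one-step transition probabilities of the resulting process on $\gk{0,\ldots,M}$. If the column currently has $k$ ones, then, up to lower-order combined-event corrections, in a single discrete step the column gains an additional $1$ through the row-update and independent entry-flip mechanisms with probability proportional to $(q+p_m M)(1-k/M)/M$, and is reset to the all-zero column through the column-deletion mechanism with probability proportional to $p_d/(NM)$. All other events leave the column invariant.

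Second, I would embed this discrete chain into continuous time by accelerating time by the factor $M$ and replacing each binomial event count with an independent Poisson clock of the matching intensity. A direct slot-by-slot Bernoulli-versus-Poisson comparison, summed over the $O(MT)$ slots contained in $[0,T]$, shows the embedding error to be $O(1/M)$ in total variation; the probability of two events firing in the same slot is $O(1/M^2)$ per slot, contributing another $O(1/M)$. Under the identification $p\mapsto p_d/N$ and $p_m M\to\lambda_m$, the limiting up-rate equals $(q+\lambda_m)(1-k/M)=\alpha q(1-k/M)$ and the limiting down-rate equals $p$, matching Definition~\ref{def:singleCol}. I would then close the argument by a standard generator-difference coupling: two continuous-time Markov chains on $\gk{0,\ldots,M}$ constructed on a common probability space via coupled Poisson clocks differ in total variation at time $T$ by at most the expected total disagreement rate integrated over $[0,T]$; since each per-state rate discrepancy is $o(1)$ uniformly in the current state and the total exit rate is bounded by $q+\lambda_m+p$ independently of $M$, a Grönwall-type estimate yields TV distance $o(1)$ as $M\to\infty$.

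The main obstacle I anticipate is the bookkeeping of simultaneous-event corrections within a single discrete step: several of the three mechanisms (row update, column deletion, entry flip) can fire on the same column in the same step, and one must verify that these compound events contribute only $o(1)$ to the TV error. The scaling $p_m M\to\lambda_m$ guarantees that the individual entry-flip probability per step tends to $0$, which is exactly what suppresses these higher-order contributions; establishing the bound uniformly over the state space, in particular near $k\approx M$ where the up-rate shrinks and the relative size of the error terms could in principle blow up, is the one place where extra care is needed.
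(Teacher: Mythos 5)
Your approach is essentially the same as the paper's, just fleshed out. The paper's proof of Lemma~\ref{lem:id} is terse: it observes that rule $(S1a)$ hits an unoccupied row with probability $1-k/M$ (giving the $Q(k,k+1)\propto(1-k/M)$ factor) and then simply cites the Poisson--Binomial limit theorem for the $(S1c)$ mutation mechanism under $p_m M\to\lambda_m$, without spelling out the total-variation bookkeeping. You supply the missing machinery---a slot-by-slot Bernoulli-versus-Poisson comparison, an accounting of compound events within a single discrete step, and a generator-difference coupling with a Gr\"onwall closing step---which is exactly what one would write to make the paper's citation into a self-contained argument. Your worry about the region $k\approx M$ is unnecessary: the total exit rate stays bounded there and the absolute (not relative) rate discrepancy is what enters the coupling bound, so uniformity is not in danger.

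One point worth double-checking is the algebra in your identification step. You write that the limiting up-rate is $(q+\lambda_m)(1-k/M)$ and equate this with $\alpha q(1-k/M)$; under the paper's $\alpha=1+\lambda_m$ this would require $(1+\lambda_m)q=q+\lambda_m$, which fails unless $q=1$ or $\lambda_m=0$. Either the entry-flip contribution is actually modulated by $q$ in the original discrete rules of~\cite{kuehn2025mathematicalmodelsselfadaptiveresponse} (so the correct up-rate is $q(1+\lambda_m)(1-k/M)$, consistent with Remark~\ref{rem:equiv}), or the correct identification is $\alpha=1+\lambda_m/q$. This does not affect the structure of your argument---the Poisson embedding and coupling go through with whichever constant is correct---but the precise form of the matched rate needs to be reconciled with the discrete model's update rules.
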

\begin{proof}
    Indeed, rule $(S1a)$ from~\cite{kuehn2025mathematicalmodelsselfadaptiveresponse} turns a uniformly at random selected row into ones. Hence, the probability that if $k$ out of the $M$ rows are occupied, to select an unoccupied row is given by $1-\tfrac{k}{M}$. This shows that the transition rate $Q(k, k+1)$ should be proportional to $(1-\tfrac{k}{M})$.
    
    Furthermore, by the Poisson--Binomial limit theorem, see~\cite{kallenberg1997foundations}, the evolution $(S1c)$ from the original paper converges to a Poisson process for each individual column, when $p_mM\to \lambda_M$.
\end{proof}
We start by giving the exact distribution of mass associated to each state for the invariant measure.
\begin{lemma}\label{lem:invardist}
    Let $\alpha$ the rate at which we add ones to the system, i.e.,  for $k\in \gk{0, \ldots,M}$
    \begin{equation}\label{eq:rates}
    Q(k,k+1)=\alpha q(1-\tfrac{k}{M})\quad\text{and}\quad Q(k,0)=p\1\gk{k>0}-\alpha q\1\gk{k=0}\, .
\end{equation}
We then have for the invariant distribution $\rk{\pi_k}_{k=0}^M$
\begin{equation}\label{eq:invariantDist}
    \pi_k=\frac{\Gamma(M+1)}{\Gamma(M+1-k)}\frac{\Gamma(\beta+ M-k)}{\Gamma(\beta+ M)}\frac{p}{p+\alpha q}\, ,
\end{equation}
where $\beta=pM/(\alpha q)$.
\end{lemma}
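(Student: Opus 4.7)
The plan is to solve the global balance equations $\pi Q=0$ directly. Since every nonzero state resets to $0$ at rate $p$, the chain is not reversible, so detailed balance fails, but the one-sided structure of the nonzero transitions makes the balance system solvable by a first-order recursion.

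First I would pin down $\pi_0$ from the balance equation at state $0$: the only outflow is the move to $1$ at rate $\alpha q$, while the inflow comes from every $k\ge 1$ at rate $p$, giving
\begin{equation*}
    \alpha q\,\pi_0 \;=\; p\sum_{k=1}^M \pi_k \;=\; p(1-\pi_0),
\end{equation*}
so $\pi_0=p/(p+\alpha q)$, which uses the normalization $\sum_k\pi_k=1$. Next, for $1\le k\le M-1$, the only inflow to $k$ is from $k-1$, while the outflow is split between jumping to $k+1$ and resetting to $0$. The balance equation becomes
\begin{equation*}
    \bigl[\alpha q(1-k/M)+p\bigr]\pi_k \;=\; \alpha q\bigl(1-(k-1)/M\bigr)\pi_{k-1},
\end{equation*}
and a short calculation (multiplying numerator and denominator by $M/(\alpha q)$ and using $\beta=pM/(\alpha q)$) simplifies this to the clean recursion
\begin{equation*}
    \pi_k \;=\; \frac{M-k+1}{M-k+\beta}\,\pi_{k-1}.
\end{equation*}
A separate check at $k=M$ (where there is no jump to $M+1$) yields $\pi_M=\pi_{M-1}/\beta$, which is the same formula evaluated at $k=M$, so the recursion is valid for all $1\le k\le M$.

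Iterating the recursion gives
\begin{equation*}
    \pi_k \;=\; \pi_0 \prod_{j=1}^{k}\frac{M-j+1}{M-j+\beta},
\end{equation*}
and I would then identify the two telescoping products as ratios of Gamma functions: the numerator equals $M!/(M-k)!=\Gamma(M+1)/\Gamma(M-k+1)$, and the denominator equals $(M+\beta-1)(M+\beta-2)\cdots(M-k+\beta)=\Gamma(M+\beta)/\Gamma(M-k+\beta)$. Substituting the value of $\pi_0$ yields exactly the formula \eqref{eq:invariantDist}. Because the global balance system together with normalization has a unique solution on a finite irreducible state space, this is the invariant distribution.

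There is no real obstacle here beyond bookkeeping: the only care required is to verify that the recursion derived from the interior balance equations is consistent with the boundary equation at $k=M$ (so that no spurious constraint is missed) and to match the telescoping products with the correct Gamma-function shifts. Both are routine.
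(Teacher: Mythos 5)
Your proof is correct and takes essentially the same route as the paper: write down global balance $\pi Q=0$, obtain $\pi_0=p/(p+\alpha q)$ from the balance equation at $0$ together with normalization, reduce the interior equations to the first-order recursion $\pi_k=\frac{M-k+1}{M-k+\beta}\,\pi_{k-1}$, check consistency at $k=M$, iterate, and recognize the telescoping products as ratios of Gamma functions. As a minor aside, your displayed ratio is the correct one; the paper's expression $r_k=\frac{M-(k-1)}{\beta+1-k}$ contains a typo in the denominator (it should read $\beta+M-k$), as confirmed by the final formula \eqref{eq:invariantDist}.
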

\begin{proof}
Recall that the invariant distribution satisfies $\pi Q\equiv 0$, see~\cite{liggett1985interacting} and that $Q(k,k)=-\sum_{j\neq k}Q(k,j)$. Eq.~\eqref{eq:rates} implies that to find the invariant law, we need to solve the system of $M+1$ equations
\begin{equation}
    \begin{split}
        \mathrm{0}.)&\, -\pi_0\alpha q+(1-\pi_0)p=0\, ,\\
        {k}.)&\, \pi_{k-1}\alpha q\rk{1-\tfrac{k-1}{M}}-\pi_k\rk{p+\alpha q\rk{1-\tfrac{k}{M}}}=0\, , \quad \textnormal{for }k\in \gk{1,\ldots, M}\, .
    \end{split}
\end{equation}
Equation $\mathrm{0}.)$ immediately gives $\pi_0=\tfrac{p}{p+\alpha q}$, while the other $k$ equations specify the ratio of $r_k=\pi_k/\pi_{k-1}=\tfrac{M-(k-1)}{\beta+1-k}$. This gives the result by writing
\begin{equation}
    \pi_k=\pi_0\prod_{i=1}^{k}r_i=\frac{p}{p+\alpha q}\prod_{i=1}^{k}\frac{M-(k-1)}{\beta+1-k}\, ,
\end{equation}
and using the fundamental recursion for the Gamma function.
\end{proof}
Note that the jump-chain of the model from Lemma~\ref{lem:invardist} is equivalent to a discrete time Markov chain with updated parameters
\begin{equation}
    p\mapsto \frac{p}{\alpha q+p}\quad\textnormal{and}\quad q\mapsto \frac{q}{\alpha q+p}\, ,
\end{equation}
see e.g.~\cite{levin2017markov}. Hence, calculating the expected hitting time and its variance, there is no loss of generality when switching to the discrete model. We therefore assume now transition probabilities
\begin{equation}
    p(k,0)=p\quad\textnormal{and}\quad p(k,k)=q\frac{k}{M}\quad\textnormal{and}\quad p(k,k+1)=q\rk{1-\frac{k}{M}}\, ,
\end{equation}
with $p+q=1$. Furthermore, set $a=pM/q$.

Next, set $f(i)$ the expected time to reach the state of all ones, starting in state $i$, i.e., 
\begin{equation}
    f(i)=\E_i\ek{H_M}\, ,
\end{equation}
where $H_k=\inf\gk{t\ge 0\colon X_t=k}$ is the first hitting time of state $k$.
\begin{lemma}
    We have that
    \begin{equation}
        f(M-i)=\frac{M}{i!i\Gamma(a+1)q}\sum_{k=0}^{i-1}\frac{\Gamma(a+i-k)}{\Gamma(i-k+1)}\, .
    \end{equation}
    In particular
    \begin{equation}\label{eq:313251}
       f(0)=\E_0[H_M]=\frac{1}{q}\frac{\Gamma\rk{M+1+a}}{\Gamma(a+1)\Gamma(M+1)}\, .
    \end{equation}
\end{lemma}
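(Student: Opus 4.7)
My plan is to carry out a first-step analysis on $f(k)=\E_k[H_M]$, reduce it to a first-order linear recurrence in $g(i):=f(M-i)$, solve it by telescoping against a natural integrating factor built from $\prod j/(a+j)$, and then close the self-referential loop (the forcing term will inevitably contain $g(M)=f(0)$) by pairing with a renewal/Kac identity that uses the explicit $\pi_M$ from Lemma~\ref{lem:invardist}.

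The first-step equation reads $f(k)=1+p\,f(0)+(qk/M)f(k)+q(1-k/M)f(k+1)$ for $k<M$, with $f(M)=0$. Using $p=qa/M$ and the substitution $g(i)=f(M-i)$ this reduces to
\begin{equation*}
g(i) \;=\; \frac{M(1+p\,g(M))}{q(a+i)} \;+\; \frac{i}{a+i}\,g(i-1),\qquad g(0)=0.
\end{equation*}
Introducing the integrating factor $A_i:=\prod_{j=1}^{i}\tfrac{j}{a+j}=\tfrac{i!\,\Gamma(a+1)}{\Gamma(a+i+1)}$, which satisfies $A_i/A_{i-1}=i/(a+i)$, and dividing through turns the recursion into a telescope for $g(i)/A_i$. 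A short calculation using $(a+j)\Gamma(a+j)=\Gamma(a+j+1)$ collapses the inhomogeneous term into $c_j/A_j=\tfrac{M(1+p\,g(M))}{q\,\Gamma(a+1)}\cdot\tfrac{\Gamma(a+j)}{j!}$. Summing $j=1,\ldots,i$ and reindexing $k=i-j$ produces the general expression for $f(M-i)$ claimed in the statement, up to the still-unknown constant $1+p\,g(M)$.

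To pin down $g(M)=f(0)$, I would invoke the renewal argument on the jump chain: from $M$ the only non-self-loop transition is to $0$ (probability $p$), so $\E_M[\tau_M^+]=1+p\,f(0)$, while Kac's lemma gives $\E_M[\tau_M^+]=1/\pi_M$. Plugging in the closed form of $\pi_M$ from Lemma~\ref{lem:invardist} and solving for $f(0)$ yields~\eqref{eq:313251}. As an internal consistency check, equating this value of $f(0)$ with the $i=M$ instance of the telescoped formula forces the hockey-stick-type identity
\begin{equation*}
\sum_{j=1}^M \frac{\Gamma(a+j)}{j!} \;=\; \frac{\Gamma(a+M+1)-M!\,\Gamma(a+1)}{a\,M!},
\end{equation*}
which is the usual $\sum_{j=0}^M\binom{a-1+j}{j}=\binom{a+M}{M}$ after clearing factors of $\Gamma(a)$.

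The main obstacle is exactly this self-reference: a single pass of telescoping cannot produce $f(0)$, because $f(0)$ itself appears in every inhomogeneous term of the recurrence. Coupling the telescoped formula with the invariant-measure/Kac identity is what breaks the circularity; beyond that, the only work is careful Gamma-function bookkeeping to bring the sum and the coefficient into the stated forms.
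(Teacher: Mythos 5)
Your proof is correct, and it takes a genuinely different route from the paper at the key step of pinning down the unknown constant $1+pf(0)$. Both arguments begin from the same first-step recursion, and your telescoping of $g(i)=f(M-i)$ against the integrating factor $A_i=i!\,\Gamma(a+1)/\Gamma(a+i+1)$ reproduces exactly the sum $\tfrac{i!\,M(1+pf(0))}{q\,\Gamma(a+i+1)}\sum_{j=1}^i\Gamma(a+j)/j!$ that the paper obtains by verifying a guessed closed form inductively. Where you diverge is in determining $1+pf(0)$: the paper introduces the ansatz $1+pf(0)=b_{k-1}+pf(k)$, which converts the first-step recursion into a purely multiplicative recursion $b_k=b_{k-1}\bigl(1+\tfrac{pM}{q(M-k)}\bigr)$ with $b_0=1/q$ and $b_{M-1}=1+pf(0)$, solved by a telescoping product; you instead invoke Kac's lemma together with the explicit $\pi_M$ from Lemma~\ref{lem:invardist}. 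Your route is shorter and conceptually ties the hitting time to the steady state, but it imports Lemma~\ref{lem:invardist} as a prerequisite, whereas the paper's argument is self-contained within the recursion machinery. Two small cautions on your version: (i) the $\pi_M$ you need is that of the discrete (uniformized) jump chain, not literally the continuous-time chain of Lemma~\ref{lem:invardist}; under the paper's reparametrization these coincide, but you should state this explicitly since the Kac identity $\E_M[\tau_M^+]=1/\pi_M$ is a discrete-time statement. (ii) Reassuringly, your Kac closure gives $1+pf(0)=\tfrac{\Gamma(a+M)}{p\,M!\,\Gamma(a)}$, which equals the paper's $\tfrac{\Gamma(M+1+a)}{\Gamma(a+1)\Gamma(M+1)}$ precisely because $p(M+a)=a$ when $a=pM/q$; the hockey-stick sum identity you cite is then a correct and useful consistency check between the two expressions.
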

\begin{proof}
Note that the following recursion holds for $f(i)$: using the Markov property of the chain and the linearity of the expectation, we get for $\in \gk{0,\ldots, M-1}$
    \begin{equation}\label{eq:recc}
\begin{split}
    f(M)&=0\, ,\\
    f(i)&=1+pf(0)+q\tfrac{i}{M}f(i)+q\rk{1-\tfrac{i}{M}}f(i+1)\, .
\end{split}
\end{equation}
If we make the ansatz
\begin{equation}\label{eq:Ansatz}
    1+pf(0)=b_{k-1}+pf(k)\, ,
\end{equation}
we can quickly derive the following facts:
\begin{enumerate}
    \item The recursion for $i=1$ can be rewritten as 
    \begin{equation}
        f(0)=\frac{1}{q}+f(1)\, .
    \end{equation}
    Multiplying with $p$ and adding $1$ yields $b_0=\frac{1}{q}$.
    \item If we plug Eq.~\eqref{eq:Ansatz} into Eq.~\eqref{eq:recc}, we obtain
    \begin{equation}
        f(k)=b_{k-1}+pf(k)+q\tfrac{k}{M}f(k)+q\rk{1-\tfrac{k}{M}}f(k+1)\, ,
    \end{equation}
    which is equivalent to
    \begin{equation}
        f(k)=\frac{b_{k-1}}{q\rk{1-\tfrac{k}{M}}}+f(k+1)\, .
    \end{equation}
    Multiplying with $p$ and adding $b_{k-1}$ yields the recursion
    \begin{equation}\label{eq:recbk}
        b_{k}=b_{k-1}+b_{k-1}\frac{p}{q\rk{1-\tfrac{k}{M}}}\, .
    \end{equation}
    \item The recursion ends with $1+pf(0)=b_{M-1}$\, .
\end{enumerate}
Eq.~\eqref{eq:recbk} can be rewritten as $b_{k}=b_{k-1}\rk{1+\tfrac{pM}{q(M-k)}}$. It is a linear multiplicative recursion in $(b_k)_k$ and hence solvable. We hence obtain that (with $a=pM/q$)
\begin{equation}\label{eq:valofhelp}
    1+pf(0)=\prod_{k=1}^M\rk{1+\frac{p}{q}\frac{M}{k}}=\frac{\prod_{k=1}^M\rk{k+\tfrac{Mp}{q}}}{M!}=\frac{\Gamma\rk{M+1+a}}{\Gamma(a+1)\Gamma(M+1)}\, .
\end{equation}
This immediately implies
\begin{equation}\label{eq:f0zerasympt}
    f(0)=\frac{\Gamma\rk{M+1+a}}{p\Gamma(a+1)\Gamma(M+1)}-\frac{1}{p}\sim \frac{(M+1)^aM}{a\Gamma(a+1)q}\sim \frac{M^{a+1}}{\Gamma(a+1)a}\, .
\end{equation}
Now, Eq.~\eqref{eq:recc} gives that
\begin{equation}
    f(M-i)=M\frac{1+pf(0)}{pM+qi}+\frac{qi}{pM+qi}f(M-i+1)\, .
\end{equation}
Inserting our result from Eq.~\eqref{eq:valofhelp} into the recursion gives that
\begin{equation}
    f(M-i)=M(1+pf(0))i!\sum_{k=0}^{i-1}\frac{q^{k}}{(i-k)!\prod_{j=0}^{k}\rk{pM+(i-j)q}}\, .
\end{equation}
Note that
\begin{equation}
    \prod_{j=0}^{k}\rk{pM+(i-j)q}=q^{k+1}\frac{\Gamma(a+i+1)}{\Gamma(a+i-k)}\, ,
\end{equation}
and hence
\begin{equation}
    f(M-i)=\frac{\Gamma(M+1+a)\Gamma(i+1)}{\Gamma(a+i+1)\Gamma(a+1)\Gamma(M)}\sum_{k=0}^{i-1}\frac{\Gamma(a+i-k)}{\Gamma(i-k+1)} =\frac{M}{i!i\Gamma(a+1)}\sum_{k=0}^{i-1}\frac{\Gamma(a+i-k)}{\Gamma(i-k+1)}\, .
\end{equation}
This concludes the proof.
\end{proof}
Next, we derive the asymptotics for the expressions in the previous lemma.
\begin{corollary}\label{cor:singleCol}
    Assume now that $a=pM/q$ remains constant and $M\to\infty$. We then obtain
    \begin{equation}
        f(0)=\E_0\ek{H_M}\sim \frac{M^{a+1}}{\Gamma(a+1)a}\, ,
    \end{equation}
    as well as
\begin{equation}\label{eq:montf}
    \frac{(1+o(1))}{(1/a+1)}\le \frac{f(i)}{f(0)}\le 1\, .
\end{equation}
\end{corollary}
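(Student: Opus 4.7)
The plan is to obtain both assertions almost directly from the exact formulas already proven, with a short monotonicity argument as the only additional input. For the leading asymptotic of $f(0) = \E_0[H_M]$, I would simply apply Stirling to the exact identity in Eq.~\eqref{eq:313251}: using $\Gamma(M+1+a)/\Gamma(M+1) \sim M^a$ together with $p = aq/M$ and $q \to 1$ gives $f(0) \sim M^{a+1}/(a\,\Gamma(a+1))$, while the lower-order term $-1/p = O(M)$ is negligible once $a > 0$. This is essentially Eq.~\eqref{eq:f0zerasympt}, now stated as a sharp equivalence as $M \to \infty$ with $a$ held constant.

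For the two-sided bound on $f(i)/f(0)$, the main structural observation is that the chain can only advance upward by one state at a time. Hence any trajectory reaching $M$ from $0$ must pass through every intermediate state, and by the strong Markov property $f(0) = \E_0[H_i] + f(i)$; this immediately gives the upper bound $f(i) \le f(0)$ for every $i$. An entirely analogous argument, run from state $i$ until the chain first visits $i+1$, yields $f(i) \ge f(i+1)$, so the map $i \mapsto f(i)$ is non-increasing on $\{0,\ldots,M\}$.

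For the lower bound, monotonicity reduces the task to verifying $f(M-1)/f(0) \to a/(a+1)$. Specialising the recursion \eqref{eq:recc} at $i = M-1$ yields
\begin{equation*}
f(M-1) = \frac{M\,(1 + p f(0))}{pM + q}.
\end{equation*}
Substituting $pM = aq$ and the identity $1 + p f(0) = \Gamma(M+1+a)/(\Gamma(a+1)\Gamma(M+1))$ from Eq.~\eqref{eq:valofhelp}, and then applying $\Gamma(M+1+a)/\Gamma(M+1) \sim M^a$ with $q \to 1$, I obtain $f(M-1) \sim M^{a+1}/((a+1)\Gamma(a+1))$. Dividing by the asymptotic for $f(0)$ gives $f(M-1)/f(0) \to a/(a+1) = 1/(1/a + 1)$, which together with the monotonicity yields the claimed lower bound for every $i \in \{0,\ldots,M-1\}$.

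The only non-computational step is the monotonicity of $f$, which is nothing but the unit-step structure of the upward transitions; no coupling or spectral input is required. The rest is bookkeeping with the explicit formulas inherited from the previous lemma, and the only mildly delicate point is verifying that the subtracted $-1/p$ in the exact expression for $f(0)$ sits at lower order than the leading $M^{a+1}$ --- which is exactly where the assumption $a > 0$ enters.
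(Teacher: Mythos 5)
Your proposal is correct and follows essentially the same route as the paper: both extract the leading asymptotics of $f(0)$ directly from Eq.~\eqref{eq:313251} via Stirling, both obtain $f(i)\le f(0)$ from the fact that upward transitions are unit-step (the paper phrases it as a coupling, you phrase it as the strong Markov decomposition $f(0)=\E_0[H_i]+f(i)$, which are two ways of stating the same observation), and both specialize the recursion \eqref{eq:recc} at $i=M-1$ to identify $f(M-1)/f(0)\to a/(a+1)$. The one place where you are slightly more careful than the paper is that you explicitly prove $i\mapsto f(i)$ is non-increasing and then use this to propagate the lower bound from $i=M-1$ to all $i\in\{0,\dots,M-1\}$; the paper's proof only states $f(i)\le f(0)$ explicitly and leaves the monotonicity step (which is indeed needed for the lower bound to hold for all $i$) implicit. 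This is a small but genuine tightening of the write-up, not a different argument.
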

\begin{proof}
    The first relation was shown in Eq. \eqref{eq:f0zerasympt}. 
    
    For the second result, note that
    \begin{equation}
        f(0)=\E_0\ek{H_M}\ge \E_i\ek{H_M}=f(i)\, ,
    \end{equation}
    by stochastic coupling: indeed, to reach $M$ started from $0$, the walker needs to pass through $i$. Hence, the walker needs more time as compared to starting from $i$. See \cite{lindvall2002lectures} for an introduction to coupling/stochastic domination. Furthermore, note that Eq.~\eqref{eq:recc} implies that
    \begin{equation}
        f(M-1)\rk{p+\tfrac{q}{M}}=1+pf(0)\Longleftrightarrow f(M-1)=\frac{f(0)}{1+1/a}+\frac{1}{p+\tfrac{q}{M}}\, .
    \end{equation}
    The second statement then follows from the asymptotics of $f(0)$.
\end{proof}
Next, we prove an upper bound on the variance, in order to carry out a second moment argument later.
\begin{lemma}
    There exists a constant $C>0$ independent of $M$, such that
    \begin{equation}
        \Var_i\rk{H_M}\le C\E_i\ek{H_M}\, .
    \end{equation}
\end{lemma}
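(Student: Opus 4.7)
The plan is to mimic the derivation of $f(i)=\E_i[H_M]$ from the preceding lemmas: I will derive a linear recursion for $g(i):=\E_i[H_M^2]$ by conditioning on the first step, then convert it to a cleaner recursion for $\sigma^2(i):=g(i)-f(i)^2$. The Markov property and $H_M=1+H_M'$ give
$$
(1-qi/M)\,g(i)=2f(i)-1+p\,g(0)+q(1-i/M)\,g(i+1),\qquad g(M)=0,
$$
which has the same linear operator as \eqref{eq:recc} but with the constant source $1$ replaced by $2f(i)-1$. Multiplying the $f$-recursion by $f(i)$ and subtracting will yield
$$
(1-qi/M)\,\sigma^2(i)=p\,\sigma^2(0)+q(1-i/M)\,\sigma^2(i+1)+W(i),\qquad \sigma^2(M)=0,
$$
where the identity $q(1-i/M)(f(i)-f(i+1))=1+p(f(0)-f(i))$---an immediate consequence of \eqref{eq:recc}---simplifies the source to $W(i)=(f(i)-f(i+1)-1)+p(f(0)-f(i+1))(f(0)-f(i))$.

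Because the recursion is linear, I would next write $\sigma^2(i)=A_i\sigma^2(0)+B_i$ with $A_M=B_M=0$; both $A$ and $B$ can be solved in closed form by the telescoping product in \eqref{eq:recbk}--\eqref{eq:valofhelp}. Comparing the $A$-recursion with the $f$-recursion directly gives the elegant closed form $A_k=pf(k)/(1+pf(0))$, and \eqref{eq:valofhelp} then yields
$$
1-A_0=\frac{1}{1+pf(0)}=\frac{\Gamma(a+1)\,\Gamma(M+1)}{\Gamma(M+a+1)}\sim \frac{\Gamma(a+1)}{M^a}.
$$
Self-consistency at $i=0$ reads $\sigma^2(0)=B_0/(1-A_0)=B_0(1+pf(0))$, so the claim will follow if I can show $B_0\le CM$ (using $(1+pf(0))\sim M^a/\Gamma(a+1)$ and $f(0)\sim M^{a+1}/(a\Gamma(a+1))$ from Corollary~\ref{cor:singleCol}).

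The main obstacle is bounding $B_0=\sum_{k=0}^{M-1}W(k)\,P_k/(1-qk/M)$, where $P_k:=\prod_{j<k}\rho_j$ with $\rho_j=q(1-j/M)/(1-qj/M)$. A term-by-term estimate of the quadratic piece $p(f(0)-f(k+1))(f(0)-f(k))$ of $W(k)$ is $O(p\,f(0)^2)=O(M^{2a+1})$ per summand, which is too large by a factor of $M^a$; the required bound must come from cancellation. The trick I would exploit is the telescoping identity $p\,P_k/(1-qk/M)=A_kP_k-A_{k+1}P_{k+1}$ (a rewriting of the $A$-recursion), which converts $B_0$ via Abel summation into
$$
B_0=\tfrac{1}{p}\Bigl[W(0)A_0+\sum_{k=1}^{M-1}A_kP_k\bigl(W(k)-W(k-1)\bigr)\Bigr].
$$
The differences $W(k)-W(k-1)$ combined with the explicit formula for $f(k)-f(k+1)$ expose an internal cancellation in the quadratic part of $W$, after which I would invoke (i) the two-sided bound $f(k)/f(0)\in[1/(1+1/a),1]$ from Corollary~\ref{cor:singleCol} and (ii) the stochastic-domination monotonicity $\sigma^2(k)\le\sigma^2(0)$ (by coupling the chain started from $0$ to the one started from $k$, since the former must pass through the latter) to conclude $B_0\le CM$. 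This combinatorial cancellation is the delicate algebraic core of the argument.
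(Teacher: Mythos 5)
Your derivation is correct as far as it goes, and is substantially more careful than the paper's own proof. You correctly identify that the recursion for $\sigma^2(i)=g(i)-f(i)^2$ carries a nonzero source
\begin{equation*}
W(i)=\bigl(f(i)-f(i+1)-1\bigr)+p\bigl(f(0)-f(i)\bigr)\bigl(f(0)-f(i+1)\bigr)\ \ge\ 0\,,
\end{equation*}
which the paper's proof silently discards: the paper's displayed recursion $h(i)=p(g(0)-f(i)^2)+\tfrac{qi}{M}h(i)+q(1-\tfrac{i}{M})h(i+1)$ is incorrect (it would require the false identity $W(i)=p(f(0)^2-f(i)^2)$), the subsequent inequality ``$g(0)-f(i)^2\le h(0)$'' has the wrong sign (Corollary~\ref{cor:singleCol} gives $f(i)\le f(0)$, hence $g(0)-f(i)^2\ge h(0)$), and the $\tilde h$-recursion it introduces is homogeneous in $\tilde h$ with $\tilde h(M)=0$ and therefore does not determine $\tilde h(0)$ at all. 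Your linear decomposition $\sigma^2(i)=A_i\sigma^2(0)+B_i$, the closed form $A_i=pf(i)/(1+pf(0))$, and the reduction of the lemma to the bound $B_0\le CM$ are all sound.

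The genuine gap is in the final step: the cancellation you are hoping for in $B_0$ cannot occur, because the lemma as stated is false, and no algebraic rearrangement can rescue it. Starting from $0$, the chain makes a geometric number $G$ of failed excursions before reaching $M$; the per-excursion success probability is $s=\Gamma(M+1)\Gamma(a+1)/\Gamma(M+1+a)\sim\Gamma(a+1)M^{-a}$, and the duration of a failed excursion is comparable to a geometric$(p)$ variable with mean $1/p\sim M/a$. Since $s\to 0$, the contribution $\Var(G)\,\E[\tau_{\mathrm{fail}}]^2\sim s^{-2}p^{-2}\sim f(0)^2$ dominates the variance, so $\Var_0(H_M)\sim f(0)^2\sim M^{2(a+1)}/(a\Gamma(a+1))^2$, not $\Ocal(M^{a+1})$; equivalently $H_M/\E_0[H_M]$ converges in law to an exponential, as is standard for rare-event hitting times. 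Through your own identity $\sigma^2(0)=B_0\,(1+pf(0))$ this forces $B_0\sim M^{a+2}/\bigl(a^2\Gamma(a+1)\bigr)\gg M$, so no Abel summation will yield $B_0\le CM$. One can confirm this concretely by solving the recursions exactly for small $M$: with $a=2$ and $M=1,2,3$ one finds $\Var_0(H_M)/\E_0[H_M]=2,\ 7.4,\ 17.8$ (diverging) while $\Var_0(H_M)/\E_0[H_M]^2=0.67,\ 0.74,\ 0.79$ (approaching a constant). The variance bound, and with it the concentration claim in Theorem~\ref{thm:singleCol}, therefore needs to be revisited rather than proved by a cleverer cancellation.
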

\begin{proof}
    Note that for $g(i)=\E_i\ek{H_M^2}$, we have that
\begin{align}
    g(i)&=1+2\rk{pf(0)+\tfrac{qi}{M}f(i)+q\rk{1-\tfrac{i}{M}}f(i+1)}+pg(0)+\tfrac{qi}{M}g(i)+q\rk{1-\tfrac{i}{M}}g(i+1)\\
    &=2f(i)-1+pg(0)+\tfrac{qi}{M}g(i)+q\rk{1-\tfrac{i}{M}}g(i+1)\, .
\end{align}
Hence, the variance $h(i)$ with $h(i)=g(i)-f(i)^2$ satisfies
\begin{equation}
    h(i)=p\rk{g(0)-f(i)^2}+\tfrac{qi}{M}h(i)+q\rk{1-\tfrac{i}{M}}h(i+1)\, .
\end{equation}
However, $g(0)-f(i)^2\le g(0)-f(0)^2=h(0)$, by Eq.~\eqref{eq:montf}. Define
\begin{equation}
    \tilde{h}(i)=p\tilde{h}(0)+\tfrac{qi}{M}\tilde{h}(i)+q\rk{1-\tfrac{i}{M}}\tilde{h}(i+1)\, .
\end{equation}
By monotonicity, we have that $h(i)\le \tilde{h}(i)$. Note that the recursion for $\rk{\tilde{h}(i)}_i$ is the same linear recursion as it was the for the expected value $f(i)_i$. We conclude that
\begin{equation}
    \Var(H_M)\le C \E\ek{H_M}\, .
\end{equation}
This finishes the proof.
\end{proof}
\begin{proof}[Proof of Theorem \ref{thm:singleCol}]
    The first statement of Theorem \ref{thm:singleCol} has been shown in Corollary \ref{cor:singleCol}.
    
    For the second statement, observe that the second moment computation from the previous Lemma gives
    \begin{equation}
        \P_i\rk{\abs{H_M-\E_i\ek{H_M}}>k}=\Ocal\rk{\frac{M^{a+1}}{k^2}}\, .
    \end{equation}

    The third statement follows directly from Eq.~\eqref{eq:invariantDist}.
    This concludes the proof.
\end{proof}
\subsection{The multicolumn model, PAI switched off}
Write $A\in \R^{M\times N}$, i.e., $M$-rows and $N$ columns. Write $Z_j$ for the transformation which sets the the $j$-th row to zero (when multiplied from the left):
\begin{equation}
    Z_j\in\R^{N\times M}\quad\text{with the i-th row of }Z_j\text{ is given by}\quad e_i^T(1-\delta_i(j))\, ,
\end{equation}
 where $e_i$ is the $i$-th unit vector in $\R^M$. Write $O_j$ for the matrix with $j$-th row equal to ones and the rest to zero:
 \begin{equation}
     O_j\in\R^{M\times N}\quad\text{where}\quad O_j(a,b)=\delta_j(a)\, .
 \end{equation}
Write $D_i\in\R^{N\times M}$ for the deletion of the $i$-th column (when multiplied from the right), i.e., the $k$-th column of $D_i$ is given by
\begin{equation}
    e_k(1-\delta_i(k))\, .
\end{equation}
We then write for the two actions on the current state: $H_j$ sets the $j$-th row equal to 1 (by deleting it first by the $Z_j$-left multiplication and then adding it through $O_j$) and $ W_i$ deletes the $i$-th row (using $D_i$)
\begin{equation}
    H_j(A)=Z_jA+O_j\quad \text{and}\quad W_i(A)=AD_i\, .
\end{equation}
We can the restate our evolution as
\begin{equation}
    \delta_A\mapsto \sum_{i=1}^N\frac{p}{N}\delta_{W_i(A)}+\sum_{j=1}^M \frac{q}{M}\delta_{H_j(A)}\, .
\end{equation}
We can also generalize the $O_j$'s to $O_S$, where $S$ is the set of $1$-rows and the $D_i$'s to $D_R$ where $R$ is the set of unit vectors. Note that
\begin{equation}
    H_{S_1}\circ H_{S_2}=H_{S_1\cup S_2}\quad\text{and}\quad W_{R_1}\circ W_{R_2}=W_{R_1\cup R_2}\, .
\end{equation}
A quick calculation shows that
\begin{equation}
    O_SD_R(i,j)=\delta_S(i)\delta_{R^c}(j)\, .
\end{equation}
Define $B(A,B)=O_AD_B$ and abbreviate the added rows and deleted columns as
\begin{equation}
    \overline{S_i}=\bigcup_{j=i}^TS_j\quad\text{and}\quad  \overline{R_i}=\bigcup_{j=i}^TR_j
\end{equation}
Performing a sequence of deletions $R_1,\ldots,R_T$ and additions $S_1,\ldots,S_T$ then leads to
\begin{equation}\label{eq:4125}
    W_{R_T}\circ H_{S_T}\circ\ldots\circ W_{R_1}\circ H_{S_1}(A)=Z_{\overline{S_1}}AD_{\overline{R_i}}+\sum_{i=1}^T B\rk{S_i\setminus\overline{S_{i+1}},\overline{R_i}}\, .
\end{equation}
This representation is beautiful, because we can infer the following construction for the invariant state:
\begin{enumerate}
    \item In each step, select \texttt{row} with probability $q$ and \texttt{column} with probability $p$.
    \item Depending on step 1, select uniformly at random a row in $\gk{1,\ldots, M}$ (resp. column in $\gk{1,\ldots, N}$).
    \item If \texttt{column} was selected, add this to the list of forbidden columns.
    \item If \texttt{row} was selected, switch the entries in that row to 1, if they are not in a forbidden column.
    \item Continue until there are no allowed columns left or you have selected all rows.
\end{enumerate}
\begin{lemma}\label{lem:invDist}
    The above procedure samples the from the invariant distribution.
\end{lemma}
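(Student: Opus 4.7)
The plan is to apply time-reversal to the explicit representation~\eqref{eq:4125}, showing that the sampling procedure is precisely what one obtains by reading the chain's jump history in reverse and stopping as soon as the state at time $T$ is determined. Because the embedded jump chain is an i.i.d.\ sequence of events, its joint law is invariant under reversal, which lets us identify the backward description with the forward sampling algorithm.

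First I would fix a large $T$ and let $(A_t)_{0\le t\le T}$ be a trajectory started from an arbitrary $A_0$. The embedded jump chain associated to Definition~\ref{def:mutlCol} is i.i.d.: at each step we see ``write row $j$'' with probability $q/M$ for each $j\in\gk{1,\ldots,M}$ or ``delete column $i$'' with probability $p/N$ for each $i\in\gk{1,\ldots,N}$; hence with total mass $q$ on rows and $p$ on columns (using $p+q=1$). Reindexing $\tilde S_k=S_{T-k+1}$ and $\tilde R_k=R_{T-k+1}$, equation~\eqref{eq:4125} rewrites as
\begin{equation}\label{eq:rewrittenback}
A_T = Z_{\overline{S_1}}\,A_0\,D_{\overline{R_1}} + \sum_{k=1}^{T} B\rk{\tilde S_k\setminus \bigcup_{j<k}\tilde S_j,\; \bigcup_{j\le k}\tilde R_j}\, .
\end{equation}
Entrywise this reads: $(A_T)_{j,i}=1$ iff the first backward step $k^\star$ at which row $j$ is written satisfies $i\notin \tilde R_1\cup\ldots\cup\tilde R_{k^\star}$; otherwise $(A_T)_{j,i}=0$. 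This matches the output of the sampling procedure exactly, interpreting the set of ``forbidden columns after $k$ backward steps'' as $\tilde R_1\cup\ldots\cup\tilde R_k$; note that since entries are only ever set to $1$ (never reset), later backward writes of an already-written row---corresponding to earlier forward writes---contribute nothing.

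It remains to justify the stopping rule and to pass to stationarity. The initial-state contribution $Z_{\overline{S_1}}A_0 D_{\overline{R_1}}$ vanishes as soon as either every row has been written ($\overline{S_1}=\gk{1,\ldots,M}$) or every column has been deleted ($\overline{R_1}=\gk{1,\ldots,N}$), both of which occur almost surely in finite time by a coupon-collector argument. The algorithm's stopping criterion---terminate when all columns are forbidden or all rows selected---is exactly the condition that the sum of $B$-terms in~\eqref{eq:rewrittenback} is frozen henceforth: any further backward event either targets an already-written row or adds a column to a saturated forbidden set, changing nothing. Therefore the algorithm's output has the same law as $A_T$ for all sufficiently large $T$, and since $A_T$ converges in distribution to the invariant law $\pi$ as $T\to\infty$, the algorithm samples from $\pi$. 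The main (mild) obstacle is the bookkeeping to verify the reindexing and to check that the reversed embedded chain has the same law as the forward one; both are immediate consequences of the i.i.d.\ structure of the jumps and the almost sure finiteness of the termination time.
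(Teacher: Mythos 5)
Your proof is correct and follows essentially the same approach as the paper: read the i.i.d.\ jump sequence backward, observe that the initial-condition term $Z_{\overline{S_1}}A_0 D_{\overline{R_1}}$ in~\eqref{eq:4125} vanishes almost surely in finite time, and identify the reindexed sum with the forward sampling algorithm. The paper's version is much terser, but your elaboration --- the explicit reindexing $\tilde S_k = S_{T-k+1}$, the entrywise reading of the $B$-terms, and the observation that the stopping rule is exactly the condition under which the sum freezes --- is a faithful and more detailed account of the same time-reversal argument.
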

\begin{proof}
    Letting the chain evolve for a sufficiently long time erases the dependence on the initial condition as $Z_{\overline{S_1}}AD_{\overline{R_i}}\equiv 0$ eventually. We see that reversing the order of summation in Eq. \eqref{eq:4125} is precisely the procedure outlined above. Recall that the probability that an exponential random clock with parameter $q$ rings before an independent clock with parameter $p$ is exactly given by $q$, as we assumed $p+q=1$.
\end{proof}
This is quite convenient, because we can estimate the probability that a column is equal to only ones.
\begin{lemma}
    Let $\tau$ be the time such that above procedure has selected each row at least once. We then have that for every $\e_M>\log^{-1/2}(M)$
    \begin{equation}
        \P\rk{\abs{\tau-\frac{M\log(M)}{q}}>\e_MM\log(M)}=o(1)\, .
    \end{equation}
    This implies a sharp transition on the $M\log(M)$ scale.
\end{lemma}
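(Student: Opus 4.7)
The plan is to interpret $\tau$ as a thinned coupon-collector time and conclude by a second-moment (Chebyshev) argument. The key observation is that the row-collection dynamics depend only on the sub-sequence of row selections: each step of the procedure is independently declared a ``row step'' (probability $q$) or a ``column step'' (probability $p$), and the column-side bookkeeping (the evolving forbidden set) never affects which new row is brought into the collection. Letting $T_j$ denote the number of steps needed to advance from $j-1$ to $j$ distinct collected rows, the independence across steps together with the memoryless property imply that $T_1,\ldots,T_M$ are independent and $T_j\sim\mathrm{Geom}(p_j)$ with $p_j=q(M-j+1)/M$; hence $\tau=\sum_{j=1}^M T_j$.

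Computing the first two moments is then routine:
\begin{equation*}
\E[\tau]=\sum_{j=1}^M\frac{1}{p_j}=\frac{M}{q}\sum_{i=1}^M\frac{1}{i}=\frac{M H_M}{q}=\frac{M\log M}{q}+O(M),
\end{equation*}
and, using $\sum_{i\ge 1}i^{-2}=\pi^2/6$,
\begin{equation*}
\Var(\tau)=\sum_{j=1}^M\frac{1-p_j}{p_j^2}\le\frac{M^2}{q^2}\sum_{i=1}^M\frac{1}{i^2}\le\frac{\pi^2 M^2}{6q^2}.
\end{equation*}
Since the hypothesis $\e_M>\log^{-1/2}(M)$ implies $\e_M\log M\to\infty$, the $O(M)$ discrepancy between $\E[\tau]$ and $M\log(M)/q$ is negligible compared to the tolerance $\e_M M\log M$. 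Chebyshev's inequality therefore delivers
\begin{equation*}
\P\!\left(\left|\tau-\tfrac{M\log M}{q}\right|>\e_M M\log M\right)\le\frac{O(M^2)}{(\e_M M\log M)^2}=O\!\left(\frac{1}{\e_M^2\log^2 M}\right)=o(1),
\end{equation*}
which is the desired statement.

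I do not anticipate any serious obstacle. The only point requiring a moment's care is the independence of the $T_j$: one must verify that the column-step thinning and the growing set of forbidden columns do not disturb the coupon-collector structure on rows. This follows because the two coins at each step (row-vs-column, and the uniform selection within the chosen class) are independent of the entire past and of each other, so the row sub-sequence is an i.i.d.\ sequence of uniform $\{1,\ldots,M\}$-picks interspersed independently with column ``holes''—exactly the geometric thinning used above.
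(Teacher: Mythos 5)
Your proof is correct, but it follows a genuinely different route from the paper's. The paper separates the two sources of randomness: it first shows that the number of row-steps among the first $m$ steps concentrates around $mq$ (via a Binomial concentration inequality and a union bound over $m$), and then applies known exponential tail bounds for the classical coupon collector to the row-step subsequence. You instead fold the Bernoulli thinning directly into the collector: the waiting time from $j-1$ to $j$ collected rows is geometric with success probability $p_j=q(M-j+1)/M$, so $\tau$ is a sum of independent geometrics, and you compute its mean and variance in closed form and finish with Chebyshev. Your route is more elementary and shorter, and the absorption of the thinning into the geometric success probabilities is clean; the trade-off is that Chebyshev only yields a polynomial tail $O(\e_M^{-2}\log^{-2}M)$, whereas the paper's argument gives a stretched-exponential bound $O(\exp(-\log^{1/2}M))$. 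Since the lemma only asserts $o(1)$, and the hypothesis $\e_M>\log^{-1/2}(M)$ guarantees $\e_M^2\log^2 M\to\infty$, your weaker bound is fully sufficient. Your observation that the $O(M)$ gap between $\E[\tau]=MH_M/q$ and $M\log M/q$ is negligible against the tolerance $\e_M M\log M$ is the same centering correction the paper elides; good that you made it explicit.
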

\begin{proof}
    Let $R(n)$ be the number of times we have selected \texttt{rows} after $n$ selections. A first moment argument together with a concentration inequality for Binomial random variables shows that for $M_2>M$
    \begin{equation}\label{eq:approx}
        \P\rk{\sup_{m\in \ek{M,M_2}}\abs{R(m)-mq}>\alpha}=\Ocal\rk{ M_2\ex^{-c\alpha^2/M}}\, .
    \end{equation}
    Next recall the following classic facts from the coupon collector's problem~\cite[Sec 3.3.1]{mitzenmacher2017probability}: if we are drawing from $n$ urns independently, it will take $n\log(n)$ steps until we have select each urn at least once. To be more precise, let $\sigma$ be the first time all urns were selected. We have that (after some application of the exponential Chebyshev's inequality)
    \begin{equation}\label{eq:uppertails313251}
        \P\rk{\sigma<n\log(n)-c n}\le \ex^{-3c^2/\pi^2}\quad\text{and}\quad \P\rk{\sigma>n\log(n)+c n}\le \ex^{-c}\, ,
    \end{equation}
    for $c>0$ larger than some finite constant. Hence, we can use Eq.~\eqref{eq:approx}, substituting $n=\tfrac{M}{q}$ and prove 
    \begin{equation}
        \P\rk{\abs{\tau-\frac{M\log M}{q}}>M\log^{1/2}(M)}=\Ocal\rk{\ex^{-\log^{1/2}(M)}}=o(1)\, ,
    \end{equation}
    which readily implies the lemma.
\end{proof}
We can use this to give an estimate of the number of columns equal to 1. However, before we need a technical lemma.
\begin{lemma}\label{lem:technical}
    Let $T$ be the collection time in the coupon collector's problem with $M$ coupons, where we draw according to an exponential random clock with parameter $q$. We then have that
    \begin{equation}
        \E\ek{\ex^{-\alpha T}}=\frac{\Gamma\rk{M+1}\Gamma\rk{1+M\alpha/q}}{\Gamma\rk{M+1+M\alpha/q}}\sim \frac{\Gamma(1+M\alpha/q)}{M^{M\alpha/q}}\, ,
    \end{equation}
    where asymptotics hold given that $\alpha/q$ does not vary with $M$.
\end{lemma}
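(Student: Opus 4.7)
The plan is to reduce the Laplace transform of $T$ to a Beta integral. First, by Poisson thinning, I would note that if coupons are drawn from a rate-$q$ clock and each draw selects a type uniformly from $\gk{1,\ldots,M}$, then the first time $T_i$ at which coupon $i$ is seen is exponentially distributed with parameter $q/M$, and the $T_i$ are independent across $i$. Consequently $T=\max_i T_i$ and
\begin{equation}
\P(T\le t) = \rk{1-\ex^{-qt/M}}^M.
\end{equation}

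Next, I would differentiate this CDF to extract the density, insert it into the integral defining $\E\ek{\ex^{-\alpha T}}$, and make the substitution $u = 1-\ex^{-qt/M}$, under which $\ex^{-\alpha t} = (1-u)^{M\alpha/q}$. A short calculation identifies the resulting integral as $M\cdot B(M,\, M\alpha/q+1)$, and Euler's identity $B(a,b)=\Gamma(a)\Gamma(b)/\Gamma(a+b)$ yields the closed form claimed in the lemma.

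For the asymptotic, I would apply the standard Gamma-ratio identity $\Gamma(M+1)/\Gamma(M+1+c)\sim M^{-c}$ as $M\to\infty$ for fixed $c$. This is applicable in the regime in which the lemma will be invoked, namely when $M\alpha/q$ is kept constant (so that $\alpha$ effectively scales like $1/M$); this matches the setting of Theorem~\ref{thm:multCol}, where the analogous quantity plays the role of the fixed parameter $b$. Plugging in $c=M\alpha/q$ constant then yields $\Gamma(1+M\alpha/q)/M^{M\alpha/q}$ as claimed.

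I do not anticipate a serious obstacle. The most delicate points to execute are the Poisson thinning step, which decouples the hitting times $T_i$ and makes the Beta representation available in the first place, and the change of variables producing the Beta integral. As a sanity check, the closed form reduces to $1$ at $\alpha=0$, and its derivative at $\alpha=0$ recovers the classical mean $(M/q)\sum_{k=1}^M 1/k$ of the continuous-time coupon collector time, confirming the correctness of the computation.
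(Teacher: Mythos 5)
Your argument is correct, and it reaches the identical closed form and asymptotic, but it travels a genuinely different route from the paper. The paper uses the classical ``stages'' decomposition of the continuous-time coupon collector time: $T=\sum_{i=0}^{M-1}T_i$ with independent $T_i\sim\mathrm{Exp}\bigl(q(1-i/M)\bigr)$, so that the Laplace transform is immediately a telescoping product $\prod_{i=0}^{M-1}p_i/(p_i+\alpha)$, which rearranges directly into the Gamma ratio. You instead use Poisson thinning to write $T=\max_i T_i'$ with $T_i'$ i.i.d.\ $\mathrm{Exp}(q/M)$, obtain the CDF $(1-\ex^{-qt/M})^M$, and convert the Laplace transform into the Beta integral $M\,B(M,M\alpha/q+1)$. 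Both decompositions are standard and equivalent (the max of $M$ i.i.d.\ rate-$\lambda$ exponentials is distributed as the sum of independent exponentials of rates $M\lambda,\ldots,\lambda$), and the thinning step is valid precisely because the uniform mark at each Poisson arrival splits the rate-$q$ process into $M$ independent rate-$q/M$ processes. The paper's sum representation is slightly shorter, as the product form of the transform is immediate with no integral to evaluate; your max representation is arguably more conceptual, making the distribution of $T$ explicit and connecting to the Beta--Gamma calculus, and it yields the same Stirling-type asymptotic at the end. Your sanity checks at $\alpha=0$ (value $1$, derivative $-\tfrac{M}{q}H_M$) confirm both computations.
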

\begin{proof}
    Recall that $T=\sum_{i=0}^{M-1}T_i$, where $T_i$ is exponential with parameter $p_i=q(1-i/M)$. We hence have that
    \begin{equation}
        \E\ek{\ex^{-\alpha T}}=\prod_{i=0}^{M-1}\frac{p_i}{p_i+\alpha}=\prod_{i=0}^{M-1}\frac{M-i}{M-i+M\alpha/q}=\frac{\Gamma\rk{M+1}\Gamma\rk{1+M\alpha/q}}{\Gamma\rk{M+1+M\alpha/q}}\, .
    \end{equation}
   The result now follows from Stirlings formula, as before.
\end{proof}
We are now ready to specify the expected number of columns equal to ones.
\begin{lemma}\label{lem:multColDen}
    Let $\chi_i=\1\gk{\textnormal{column }i\equiv 1}$. Then, if $M,N\to\infty$ and $M/N$ converges to a constant and $b=\tfrac{Mp}{qN}$ remains fixed, we have
    \begin{equation}
        \E_\pi\ek{\sum_i \chi_i}\sim  N  \frac{\Gamma(1+b)}{M^{b}}\, .
    \end{equation}
\end{lemma}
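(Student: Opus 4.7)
The plan is to use the sampling representation from Lemma~\ref{lem:invDist}, lifted to continuous time. Because row selections occur at total rate $q$ with a uniform choice of row, and column forbiddings at total rate $p$ with a uniform choice of column, Poisson thinning provides $M+N$ independent exponential clocks: for each row $j$ the first-selection time $R_j \sim \mathrm{Exp}(q/M)$, and for each column $i$ the first-forbidding time $C_i \sim \mathrm{Exp}(p/N)$. Set $T_R := \max_{j=1}^M R_j$; this is exactly the continuous-time coupon-collector random variable $T$ analysed in Lemma~\ref{lem:technical}.

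The key observation is the event-level identity
\begin{equation}
\{\chi_c = 1\} \;=\; \{T_R < C_c\}\,.
\end{equation}
Indeed, entry $(j,c)$ of the final matrix equals $1$ iff row $j$ is selected before column $c$ is forbidden, so column $c$ is all-ones iff $R_j < C_c$ for every $j$, equivalently $T_R < C_c$. The stopping rule in Lemma~\ref{lem:invDist} is automatically consistent with this: on $\{T_R<C_c\}$ the procedure terminates by the all-rows-selected condition at time $T_R$ with column $c$ still unforbidden, whereas on the complement column $c$ is zeroed before halting. By symmetry and conditioning on $T_R$,
\begin{equation}
\E_\pi\!\left[\sum_{i=1}^N \chi_i\right] \;=\; N\, \P(T_R < C_1) \;=\; N\, \E\!\left[\ex^{-(p/N)T_R}\right]\,.
\end{equation}

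The final step is to invoke Lemma~\ref{lem:technical} with $\alpha = p/N$, noting that $M\alpha/q = Mp/(qN) = b$. Since $b$ stays fixed as $M\to\infty$, Stirling gives
\begin{equation}
\E\!\left[\ex^{-(p/N)T_R}\right] \;=\; \frac{\Gamma(M+1)\Gamma(1+b)}{\Gamma(M+1+b)} \;\sim\; \frac{\Gamma(1+b)}{M^b}\,,
\end{equation}
which after multiplication by $N$ yields the claimed asymptotic. I expect the only non-mechanical part to be the translation of the informally stated stopping rule in Lemma~\ref{lem:invDist} into the clean inequality $T_R<C_c$; once the representation in terms of the independent exponential clocks is in place, the remainder is a direct application of the technical lemma.
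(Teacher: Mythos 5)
Your proposal is correct and follows essentially the same route as the paper: both use the time-reversed sampling representation (Lemma~\ref{lem:invDist}), identify $\{\chi_c=1\}$ with the event that the coupon-collector time for rows precedes column $c$'s first forbidding time, exploit independence to write $\P(\tau<H_1)=\E[\ex^{-(p/N)\tau}]$, and then invoke Lemma~\ref{lem:technical} with $\alpha=p/N$. Your explicit Poisson-thinning setup and spelling out of the event identity $\{\chi_c=1\}=\{T_R<C_c\}$ simply makes more explicit what the paper states compactly as $\P(\chi_1=1)=\P(\tau<H_1)$ followed by the conditioning computation.
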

\begin{proof}
    We have that
    \begin{equation}
        \P\rk{\chi_1=1}=\P\rk{\tau<H_1}\, ,
    \end{equation}
    where $H_1$ is the first time we have selected column 1. A naive approximation $\tau\approx M\log(M)/q$ does not work here, as the upper tails in Eq.~\eqref{eq:uppertails313251} are too heavy. However, this can be rectified using Lemma \ref{lem:technical},as shown below:

    We expand
        \begin{equation}\label{eq:multl}
        \P\rk{\tau<H_1}=\int_0^\infty \P\rk{k<H_1|\tau= t}\P\rk{\tau=\d t}\d t=\int_0^\infty \ex^{-t p/N}\P\rk{\tau=\d t}\d t\, ,
    \end{equation}
     since the exponential clocks are independent. 
    We can now use Lemma \ref{lem:technical} in conjuction with Eq.\eqref{eq:multl} to conclude that
    \begin{equation}
        \P\rk{\tau<H_1}=\frac{\Gamma\rk{M+1}\Gamma\rk{1+Mp/(Nq)}}{\Gamma\rk{M+1+Mp/(Nq)}}\sim \frac{\Gamma(1+Mp/Nq)}{M^{Mp/Nq}}\, .
    \end{equation}
    The result then follows from the linearity of the expectation as $ \E_\pi\ek{\sum_i \chi_i}=N\P\rk{\chi_1=1}$.
\end{proof}
We can now prove Theorem \ref{thm:multCol}.
\begin{proof}[Proof of Theorem \ref{thm:multCol}]
    Now that that the second statement (Eq.~\eqref{eq:RationPAIoff}) has been proven already in Lemma \ref{lem:multColDen}. It remains to prove bounds on the transition time, Eq.~\eqref{eq:transTimePAIoff}.

    First of all, note that on account of Eq. \eqref{eq:uppertails313251} and the union bound, we have that 
    \begin{equation}\label{eq:71520251}
        \P\rk{\exists c\in\gk{1,\ldots, N}\text{ with }c\equiv 1 \text{ at time }t}\, ,
    \end{equation}
    decays faster than any polynomial in $N$ and $M$ for all $t\le \tfrac{M\log(M)}{q}-M\log^{2/3}(M)$. 
    
    On the other hand, at time $t=\tfrac{M\log(M)}{q}+M\log^{2/3}(M)$, with probability $1-\Ocal\rk{\ex^{-\log(M)^{2/3}}}$, we will have chosen every row at least once. Hence on that event, using the representation of the evolution given in Eq. \eqref{eq:4125}, we see that the dependence on the initial $A\equiv 0$ has already been lost. This implies that we are in steady state (time-reversal!) and hence see $N\frac{\Gamma\rk{b+1}}{(M/q)^{b}}$ columns of all ones.
\end{proof}
\subsection{The multicolumn model, PAI switched on}
With the easier case solved, we now explain the adaptations needed for the general case. We begin with a description of the steady state.
\begin{lemma}\label{LemPAIon}
    The following algorithm samples the steady-state distribution of the (jump-chain) of the model with PAI switched on: start with the zero matrix. Evolve time in discrete steps. At each step, 
    \begin{enumerate}
        \item with probability $p/(1+N\l_m)$, select a uniformly chosen column to the list of forbidden columns,
        \item with probability $q/(1+N\l_m)$, turn the entries of a uniformly selected row into ones, given that the entry is not located in a forbidden row,
        \item with probability $\l_m N/(1+N\l_m)$, turn a uniformly selected entry of the matrix into a one, given that the entry is not a forbidden column.
    \end{enumerate}
\end{lemma}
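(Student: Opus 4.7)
The strategy mirrors the proof of Lemma~\ref{lem:invDist}, extending the operator-based representation of the forward evolution from Eq.~\eqref{eq:4125} to incorporate the new single-entry updates, and then exploiting reversal of the event sequence of the jump chain.

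First, alongside $H_j$ and $W_i$, I would introduce the single-entry operator $E_{i,j}$ that sets entry $(i,j)$ of a matrix to $1$ while leaving all other entries unchanged. Proceeding by induction on the length of the sequence, one derives a closed-form expression for an arbitrary composition of $W$'s, $H$'s, and $E$'s generalizing Eq.~\eqref{eq:4125}. As in the PAI-off case, the output should split into a term depending on the initial state---which vanishes as soon as every column has been deleted at least once by some $W_{R_\ell}$---and an explicit sum in which each $H$- or $E$-operation contributes $1$'s exactly at those positions whose column is not erased by any later $W$.

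Next, I would observe that because the total exit rate $p+q+N\l_m = 1+N\l_m$ does not depend on the current state, the embedded jump chain has i.i.d.\ event labels: each jump is, independently of the past, a column deletion with probability $p/(1+N\l_m)$, a row addition with probability $q/(1+N\l_m)$, or a single-entry update with probability $N\l_m/(1+N\l_m)$, with the specific column, row, or entry chosen uniformly. Consequently, the reversed event sequence has the same joint law. Reading the generalized representation formula in reverse temporal order, the clause ``survives only if no later $W$ deletes its column'' becomes ``the column is not yet on the forbidden list'', which is precisely what steps (1)--(3) of the stated algorithm implement.

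Finally, termination: letting the forward chain run until every column has been deleted at least once erases all dependence on the initial matrix, so the stationary law is recovered; in reverse time this corresponds to exhausting the forbidden list. The main obstacle, as in Lemma~\ref{lem:invDist}, is the careful bookkeeping in the generalized representation formula, in particular verifying that each single-entry update contributes the correct $1$ and only when its column is not erased afterwards. Once that identity is in place, the equivalence between the stationary law and the distribution sampled by the algorithm follows immediately from the fact that the time-reversed sequence of i.i.d.\ events is distributionally identical to the forward one.
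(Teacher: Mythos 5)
Your argument follows essentially the same route the paper takes, namely the one-line remark that Lemma~\ref{LemPAIon} ``follows the same lines as that of Lemma~\ref{lem:invDist}, based on time-reversal.'' You spell out the needed ingredients correctly: introduce a single-entry operator $E_{i,j}$, extend the composition formula of Eq.~\eqref{eq:4125}, observe that the (uniformized) jump chain has i.i.d.\ event labels with the stated probabilities because the total attempt rate $p+q+N\l_m=1+N\l_m$ is state-independent, and then read the sum in reverse. One small point worth tightening: the initial-condition term vanishes once every column has been deleted \emph{or} every row has been overwritten by an $H_{S_\ell}$ (the $E$-operations alone do not clear the initial state), which is the disjunction that appears in the termination step of the PAI-off algorithm; you mention only the column-deletion clause, though this does not affect the correctness of the reversal argument.
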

The argument for the proof of Lemma \ref{LemPAIon} follows the same lines as that of Lemma \ref{lem:invDist}, based on time-reversal.

Recall the new parameter $\widetilde{q}=q+\lambda_m$. Denote $\widetilde{b}=\tfrac{\widetilde{p}M}{\widetilde{q}N}$ and assume that it remains constant.
Next, we analyze the time it takes to approach the steady state.
\begin{lemma}
    We have that for every $\e_M>\log^{-1/2}(M)$
    \begin{equation}
        \P\rk{\abs{\tau-\frac{M\log(M)}{\widetilde{q}}}>\e_MM\log(M)}=o(1)\, .
    \end{equation}
\end{lemma}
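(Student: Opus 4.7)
The plan is to follow the strategy of the PAI-off case, with $q$ replaced by $\widetilde{q} = q + \lambda_m$, using Lemma~\ref{LemPAIon} as the analog of Lemma~\ref{lem:invDist}. The key observation is that, from the point of view of a fixed column $i$, each row $j$ becomes~$1$ in column~$i$ either via a row-$j$ update (rate $q/M$) or via an entry-$(j,i)$ update (rate $\lambda_m/M$), so ``row~$j$ is covered in column~$i$'' occurs at rate $\widetilde{q}/M$. Thus the completion of column $i$ is governed by a coupon-collector process over $M$ coupons with draws at total rate $\widetilde{q}$.

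For the lower bound, fix a column $i$. The event that column $i$ is all-ones by time $t$ implies that the associated coupon-collector process (tracking which rows have been covered in column $i$) has completed by time $t$; indeed, ignoring column-$i$ deletions only makes completion more likely. Combining the concentration estimate Eq.~\eqref{eq:approx} applied to the combined row- and entry-update count for column $i$ (mean $t\widetilde{q}$) with the lower tail in Eq.~\eqref{eq:uppertails313251}, the probability that this completion occurs before $M \log(M) / \widetilde{q} - M \log^{2/3}(M)$ decays faster than any polynomial in $M$. A union bound over the $N$ columns yields the lower tail for $\tau$.

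For the upper bound, I would examine the state at time $t = M \log(M) / \widetilde{q} + M \log^{2/3}(M)$. By the same concentration estimate and the coupon-collector upper tail, with probability $1-\Ocal\rk{\ex^{-\log^{2/3}(M)}}$ every row has been chosen by a row-update at least once. As in the PAI-off case, this erases the dependence on the initial matrix $A \equiv 0$ in the natural extension of the representation Eq.~\eqref{eq:4125}, which now contains a third summand collecting the entry-updates in non-forbidden columns. Hence the law at time $t$ coincides (in total variation) with the stationary distribution described by the algorithm of Lemma~\ref{LemPAIon}. By the PAI-on analog of Lemma~\ref{lem:multColDen} (proved by the same Laplace-transform computation as in Lemma~\ref{lem:technical}, with $q$ replaced by $\widetilde{q}$), one obtains at least one all-ones column at time $t$ with high probability, giving $\tau \le t$.

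The main obstacle is the bookkeeping of the composition of the three operation types in the extension of Eq.~\eqref{eq:4125}: the row-set operators $H_j$ and column-delete operators $W_i$ interact as in the PAI-off case, but now one must also incorporate entry-set operators $E_{i,j}$. One has to verify that, once every row has been row-updated, the extra single-entry updates do not disturb the time-reversal argument that places the state in stationarity. A careful commutation analysis, treating the $E_{i,j}$'s as additive perturbations to the entries in non-forbidden columns, should resolve this cleanly, after which the concentration and coupon-collector estimates transfer verbatim from the PAI-off proof.
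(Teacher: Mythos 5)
Your proposal follows the paper's two-step template (coupon-collector lower tail plus a time-reversal argument for the upper bound), and the key rate observation—that for a fixed column $i$ each row $j$ is covered at rate $\widetilde{q}/M=(q+\lambda_m)/M$, combining row-$j$ updates with entry-$(j,i)$ updates—is exactly right and exactly what produces the $M\log M/\widetilde{q}$ scale. The lower-bound step is sound. The paper reaches the same lower bound via the exact alternating-sum formula for the coupon-collector CDF (Eq.~\eqref{eq:71520252}) rather than via Eqs.~\eqref{eq:approx} and \eqref{eq:uppertails313251}, but these are interchangeable tools for the same end.

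The upper bound, however, has a genuine gap. You claim that at $t = M\log M/\widetilde{q} + M\log^{2/3}(M)$ every row has, with high probability, been chosen by a \emph{row-update}. This is false once $\lambda_m>0$: row-updates occur at total rate $q<\widetilde{q}$, so the row-only coupon collector finishes around time $M\log M/q$, strictly larger than $M\log M/\widetilde{q}$. The representation Eq.~\eqref{eq:4125} erases the initial condition only after every $H_j$ has occurred, so the time-reversal argument as you invoke it fires at the wrong time scale, and your subsequent ``bookkeeping'' concern about the entry-operators $E_{i,j}$ is secondary to this. The correct fix is to argue column by column: for a fixed column $i$, the state of that single column is independent of $A_0$ once every row $j$ has been covered \emph{in column $i$} (by a row-$j$ update \emph{or} an entry-$(j,i)$ update), which is a coupon-collector process at rate $\widetilde{q}$ over $M$ coupons and hence completes by time $t$ with probability $1-o(1)$. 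A column that additionally avoids deletion on $[0,t]$ is then all-ones; the expected number of such columns is $\sim N(M/\widetilde{q})^{-\widetilde{b}}$ (this is precisely what the Laplace-transform computation of Lemma~\ref{lem:technical} delivers), and this diverges, so at least one all-ones column exists at time $t$ with high probability.
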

\begin{proof}
    Let $P_{N,k}$ be the probability that the coupon collector problem finishes in at most $k$-draws. It is well known that
    \begin{equation}
        P_{N,k}=\sum_{i=1}^n(-1)^{N-i}\binom{N}{i}\rk{\frac{i}{N}}^k\, .
    \end{equation}
    If we make the draws in continuous-time, we have that $P_{N,t}$ is equal to
    \begin{equation}\label{eq:71520252}
        P_{N,t}=\E_t\ek{P_{N,X}}=\ex^{-t}\sum_{i=1}^N\binom{N}{i}\ex^{t(i/N)}\rk{-1}^{N-i}=\rk{\rk{1-\ex^{-t/N}}^N-(-1)^N\ex^{-t}}\, .
    \end{equation}
    Note that the above can be approximated (for $t\gg N$) by $\exp\rk{-N\ex^{-t/N}(1+o(1))}$. Combining the reasoning from Eq.~\eqref{eq:71520251} with the estimate from Eq.~\eqref{eq:71520252}, we can conclude that at time time $\frac{M\log(M)}{\widetilde{q}}-\log(M)^{2/3}$, not a single column is identical to $(1,\ldots,1)^T$. By the time-reversal argument from the PAI-off proof, the upper bound of $\frac{M\log(M)}{\widetilde{q}}+\log(M)^{2/3}$ follows.
\end{proof}

Next, we give the number of components in steady-state. As the proof is similar to that of Lemma~\ref{lem:multColDen}, we leave it to the reader.
\begin{lemma}
      Recall that $\chi_i=\1\gk{\textnormal{column }i\equiv 1}$. Then, if $M,N\to\infty$ and $M/N$ converges to a constant, we have
    \begin{equation}
        \E_\pi\ek{\sum_i \chi_i}\sim  N\frac{\Gamma\rk{\widetilde{b}+1}}{ (M/\widetilde{q})^{\widetilde{b}}}\, .
    \end{equation}
\end{lemma}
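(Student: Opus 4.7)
The plan is to mirror the proof of Lemma~\ref{lem:multColDen}, with the only essential change being that the row-update rate $q$ is replaced by the effective coverage rate $\widetilde{q}=q+\lambda_m$, which now accounts both for row-update events and for single-entry flips. Using Lemma~\ref{LemPAIon} and the backward-in-time picture, column $i$ equals $(1,\ldots,1)^{\mathrm{T}}$ in the stationary state if and only if, for every row $j$, the most recent event (in forward time) that affects entry $(j,i)$ is either a row-$j$ selection or an entry-$(j,i)$ flip, rather than a column-$i$ deletion.

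Fix $i$. For each $j\in\{1,\ldots,M\}$ define $T_j$ to be the minimum of the age of the row-$j$ Poisson process (exponential with rate $q/M$) and the age of the entry-$(j,i)$ Poisson process (exponential with rate $\lambda_m/M$), so that $T_j$ is exponential with rate $\widetilde{q}/M$. Let $H_1$ be the age of the column-$i$ deletion process, exponential with rate $p/N$. The crucial observation is that $\{T_j\}_{j=1}^M\cup\{H_1\}$ is a mutually independent collection, since each variable is the age of a distinct independent Poisson process (or the minimum of ages of two such). Hence
\begin{equation*}
    \P(\chi_i=1)=\P\rk{\tau_i<H_1}=\E\ek{\ex^{-(p/N)\tau_i}}, \qquad \tau_i:=\max_{1\le j\le M}T_j.
\end{equation*}

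By memorylessness, $\tau_i$ is distributed as the coupon-collector completion time with $M$ coupons drawn at Poisson rate $\widetilde{q}$; equivalently, $\tau_i\stackrel{d}{=}\sum_{k=0}^{M-1}W_k$ with $W_k$ independent and exponential of rate $(M-k)\widetilde{q}/M$. Lemma~\ref{lem:technical} applied with $q$ replaced by $\widetilde{q}$ and $\alpha=p/N$, so that $M\alpha/\widetilde{q}=\widetilde{b}$, together with Stirling, yields $\P(\chi_i=1)\sim\Gamma(1+\widetilde{b})/M^{\widetilde{b}}$; linearity of expectation and symmetry across the $N$ columns then give the claim. The main obstacle is the careful bookkeeping of the Poisson clocks needed to justify the independence of the $T_j$ across $j$ and from $H_1$; forbidden columns $i'\ne i$ have no interaction with column $i$ and therefore play no role in the marginal analysis of $\chi_i$, which makes the remaining computation a direct transcription of the PAI-off argument.
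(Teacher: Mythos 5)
Your argument is essentially the paper's own (the paper simply defers to ``similar to Lemma~\ref{lem:multColDen}''): you invoke Lemma~\ref{LemPAIon}, observe that for a fixed column $i$ the only relevant Poisson clocks are the $M$ row-update clocks (rate $q/M$ each), the $M$ entry-flip clocks in column $i$ (rate $\lambda_m/M$ each), and the single column-$i$ deletion clock (rate $p/N$), note the independence of these families, identify $\tau_i=\max_j T_j$ as a coupon-collector completion time with effective per-coupon rate $\widetilde{q}/M$, and then apply Lemma~\ref{lem:technical} with $q\mapsto\widetilde{q}$ and $\alpha=p/N$. This is correct, and the time-reversal/``age of the clock'' framing you use is the same mechanism behind Lemma~\ref{lem:invDist} and Lemma~\ref{LemPAIon}.

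One caveat you gloss over: your computation yields $\P(\chi_i=1)\sim\Gamma(1+\widetilde{b})/M^{\widetilde{b}}$, hence $\E_\pi[\sum_i\chi_i]\sim N\,\Gamma(1+\widetilde{b})/M^{\widetilde{b}}$, which differs from the lemma's stated $N\,\Gamma(\widetilde{b}+1)/(M/\widetilde{q})^{\widetilde{b}}$ by the constant factor $\widetilde{q}^{\,\widetilde{b}}$. Since $\widetilde{b}$ and $\widetilde{q}$ are held fixed, this is not an asymptotically negligible factor and $\sim$ is meant in the strict sense, so the two expressions are genuinely different. However, exactly the same mismatch already occurs in the PAI-off case between Lemma~\ref{lem:multColDen} (which gives $M^b$) and Theorem~\ref{thm:multCol}, Eq.~\eqref{eq:RationPAIoff} (which has $(M/q)^b$). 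Your derivation is faithful to Lemma~\ref{lem:technical}, so the factor appears to be a typo propagated from the theorem into this lemma rather than an error in your proof; but you should not write ``gives the claim'' when your formula and the displayed one differ by a nontrivial constant --- say explicitly which normalization you believe is correct.
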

With that, the proof of Theorem~\ref{thm:multCol} concludes.

\bibliographystyle{alpha}
\bibliography{reference.bib}

\begin{thebibliography}{BGAF23}

\bibitem[BGAF23]{baxevanis2023thin}
C.~Baxevanis, M.~Goulielmaki, M.~Adamaki, and S.~Fortis.
\newblock The thin red line between the immune system and cancer evolution.
\newblock {\em Translational Oncology}, 27:101555, 2023.

\bibitem[Cha23]{chatterjee2023spectral}
S.~Chatterjee.
\newblock Spectral gap of nonreversible markov chains.
\newblock {\em arXiv:2310.10876}, 2023.

\bibitem[CW05]{chung2005markov}
Kai~Lai Chung and John Walsh.
\newblock {\em Markov processes, Brownian motion, and time symmetry}.
\newblock Springer, 2005.

\bibitem[Kal97]{kallenberg1997foundations}
O.~Kallenberg.
\newblock {\em Foundations of modern probability}, volume~2.
\newblock Springer, 1997.

\bibitem[KISA23]{khanmammadova2023neuro}
N.~Khanmammadova, S.~Islam, P.~Sharma, and M.~Amit.
\newblock Neuro-immune interactions and immuno-oncology.
\newblock {\em Trends in cancer}, 9(8):636--649, 2023.

\bibitem[Kue25]{kuehn2025mathematicalmodelsselfadaptiveresponse}
C.~Kuehn.
\newblock Mathematical models for self-adaptive response to cancer dynamics.
\newblock {\em arXiv:2503.20632}, 2025.

\bibitem[Lig85]{liggett1985interacting}
T.~Liggett.
\newblock {\em Interacting particle systems}, volume~2.
\newblock Springer, 1985.

\bibitem[Lin02]{lindvall2002lectures}
T.~Lindvall.
\newblock {\em Lectures on the coupling method}.
\newblock Courier Corporation, 2002.

\bibitem[LP17]{levin2017markov}
D.~Levin and Y.~Peres.
\newblock {\em Markov chains and mixing times}, volume 107.
\newblock American Mathematical Soc., 2017.

\bibitem[MU17]{mitzenmacher2017probability}
M.~Mitzenmacher and E.~Upfal.
\newblock {\em Probability and computing: {R}andomization and probabilistic
  techniques in algorithms and data analysis}.
\newblock Cambridge university press, 2017.

\bibitem[PC01]{ParkinCohen}
J.~Parkin and B.~Cohen.
\newblock An overview of the immune system.
\newblock {\em The Lancet}, 357(9270):1777--1789, 2001.

\end{thebibliography}
\end{document}